\RequirePackage{fix-cm}
\documentclass{svjour3}

\usepackage{amsmath,amssymb,amsfonts}%
\usepackage{graphicx}%
\usepackage{multirow}%
\usepackage{mathrsfs}%
\DeclareMathAlphabet{\mathscr}{U}{rsfs}{m}{n}
\usepackage{xcolor}%
\usepackage{booktabs}%
\usepackage{hyperref}
\usepackage{algorithm}%
\usepackage{algorithmic}
\usepackage{listings}%
\usepackage[backend=biber,sorting=none]{biblatex} 
\addbibresource{references.bib}
\usepackage{mfirstuc}
\usepackage{subcaption}
\usepackage{caption}
\usepackage{easyReview}
\usepackage{bm}
\usepackage[perpage]{footmisc}


\newtheorem{assumption}{Assumption}

\newcommand{\Bcal}{{\cal B}}
\newcommand{\Ccal}{{\cal C}}

\newcommand{\Ical}{{\cal I}}

\newcommand{\Kcal}{{\cal K}}

\newcommand{\Mcal}{{\cal M}}

\newcommand{\Ocal}{{\cal O}}

\newcommand{\Rcal}{{\cal R}}
\newcommand{\Scal}{{\cal S}}

\newcommand{\Wcal}{{\cal W}}

 \newcommand{\sign}{\textrm{sgn}}                  
 \newcommand{\Proj}{\mathbb{P}}

\newcommand{\bx}{x}

\newcommand{\bH}{H}
\newcommand{\bI}{I}


\begin{document}
\titlerunning{Second-order iteratively reweighted $\ell_1$ method}
\title{Alternating Iteratively Reweighted $\ell_1$ 
 and Subspace Newton Algorithms for Nonconvex Sparse Optimization}







\author{Hao Wang       \and
        Xiangyu Yang \and
        Yichen Zhu
}

\institute{    
           Hao Wang \at
              School of Information Science and Technology, ShanghaiTech University, Shanghai, 201210, China \\
              \email{haw309@gmail.com}           
           \and
           Xiangyu Yang \at
                School of Mathematics and Statistics, Henan University, Kaifeng, 475000, China\\
                Center for Applied Mathematics of Henan Province, Henan University, Zhengzhou,  450046, China\\
              \email{yangxy@henu.edu.cn}
            \and
            Yichen Zhu (Corresponding author)\at
            School of Information Science and Technology, ShanghaiTech University, Shanghai, 201210, China \\
              \email{zhuych2022@shanghaitech.edu.cn}      
}

\date{Received: date / Accepted: date}
\maketitle
\begin{abstract}
This paper presents a novel hybrid algorithm for minimizing the sum of a continuously differentiable loss function and a nonsmooth, possibly nonconvex, sparse regularization function. The proposed method alternates between solving a reweighted $\ell_1$-regularized subproblem and performing an inexact subspace Newton step. The reweighted $\ell_1$-subproblem allows for efficient closed-form solutions via the soft-thresholding operator, avoiding the computational overhead of proximity operator calculations. As the algorithm approaches an optimal solution, it maintains a stable support set, ensuring that nonzero components stay uniformly bounded away from zero. It then switches to a perturbed regularized Newton method, further accelerating the convergence. We prove global convergence to a critical point and, under suitable conditions, demonstrate that the algorithm exhibits 
local linear and quadratic convergence rates. Numerical experiments show that our algorithm outperforms existing methods in both efficiency and solution quality across various model prediction problems.

\keywords{nonconvex regularization, smooth active manifold, subspace Newton method, iteratively reweighted method}
\end{abstract}

\section{Introduction}
In this paper, we consider optimization problems of the form
\begin{equation}\label{p.prob} \tag{$\mathscr{P}$}
	\begin{aligned}
		& \underset{x\in \mathbb{R}^n}{\min}
		& & f(x)+ \lambda h(x),
	\end{aligned}
\end{equation}
where $f:\mathbb{R}^{n}\to \mathbb{R}$ is proper and twice continuously differentiable but possibly nonconvex, $\lambda>0$ refers to the regularization parameter, and the regularization function $h:\mathbb{R}^{n} \to [0,+\infty)$ is a sum of composite functions as follows
\begin{equation}\label{eq:regularizationTerm}
    h(x):= \sum_{i=1}^{n}(r \circ \vert \cdot \vert)(x_i), \ \forall x \in \mathbb{R}^{n}. 
\end{equation}
The function $r$  satisfies  the following assumptions:
\begin{assumption}\label{assp_f_and_r}
    $r:\mathbb{R}\to \mathbb{R}_+ $ is $\Ccal^2$-smooth on $\mathbb{R}\setminus\{0\}$ with $r(0)=0$,  $r'(t) > 0$ and $r''(t) \leq 0$ for $t \in \mathbb{R}_{++}$. 
\end{assumption}


Assumption \ref{assp_f_and_r} encompasses several common surrogates for the $\ell_0$-norm. Typical examples (see  Table \ref{tab.l0approx}) include the $\ell_{p}$ regularization  with $p \in (0,1)$ \cite{fazel2003log}, Smoothly Clipped Absolute Deviation \cite{fan2001variable}, Minimax Concave Penalty \cite{zhang2010nearly}, and Cappled $\ell_1$ \cite{zhang2010analysis}, among others. These regularizations consistently yield less biased estimates for nonzero components in applications such as sparse learning, model compression, compressive sensing, and signal/image processing (see, e.g., \cite{fan2001variable,chartrand2008restricted,liu2007sparse,chen2010smoothing}).

 \begin{table}[htbp]
	\centering
	\caption{Examples of regularized functions $(0<p<1)$. We use $r'(|x_i|)$ and $r''(|x_i|)$ to denote the first- and second-order derivative of $r$ with respect to $|x_i|$.}
	  {   \begin{tabular}{lllccc}
	  \toprule
	  Regularizer  & $h(|x|)$ &  $   r'(|x_i|)$ & $r'(0^{+})$& $r''(|x_i|)$ & $r''(0^{+})$\\
	  \midrule
   	  LPN \cite{fazel2003log}   & $\sum^n_{i=1}(|x_i|)^p$  & $p(|x_i|)^{p-1}$ & $+\infty$ &$p(p-1)(|x_i|)^{p-2}$ & $-\infty$\\
	  LOG \cite{lobo2007portfolio}  &  $\sum^n_{i=1}\log(1+\frac{|x_i|}{p}),$     &  $\frac{1}{|x_i| + p}$ & $\frac{1}{p}$& $-\frac{1}{(|x_i|+p)^2}$ &$-\frac{1}{p^2} $\\
	  FRA \cite{fazel2003log}  &   $\sum^n_{i=1}\frac{|x_i|}{|x_i|+p}$ &  $\frac{p}{(|x_i|+p)^2}$ & $\frac{1}{p}$& $-\frac{2p}{(|x_i|+p)^3}$&$-\frac{2}{p^2} $\\
	  TAN \cite{candes2008enhancing}  & $\sum^n_{i=1}\arctan(\frac{|x_i|}{p})$  &  $\frac{p}{p^2 + (|x_i|)^2}$ & $\frac{1}{p}$& $-\frac{2p|x_i|}{(p^2 +|x_i|^2)^2}$& 0\\
	  EXP \cite{bradley1998feature}   &  $\sum^n_{i=1}1-e^{-\frac{|x_i|}{p}}$     &  $\frac{1}{p}e^{-\frac{|x_i|}{p}}$ & $\frac{1}{p}$&  $-\frac{1}{p^2}e^{-\frac{|x_i|}{p}}$&$-\frac{1}{p^2} $ \\
	  \bottomrule
	  \end{tabular}}
	\label{tab.l0approx}%
  \end{table}%

While first-order methods have been developed to address problems \eqref{p.prob}, 
they often exhibit slow convergence and have not fully exploited the underlying sparse structure. 
In this work, we propose a hybrid algorithm that overcomes these limitations by combining reweighted $\ell_1$ subproblems with subspace Newton iterations.
The reweighted $\ell_1$ subproblems facilitate the identification of an optimal subspace,  on which Newton iterations can be applied to accelerate local convergence.

\subsection{Related Work} 
{\bf First-order methods.} Numerous first-order methods have been developed for nonconvex sparse optimization problems. A prominent focus has been the $\ell_p (0<p<1)$ regularization, recognized as a challenging and notable representative regularizer in \eqref{eq:regularizationTerm}. A widely-used approach for $\ell_p$-regularized problems is the forward-backward (FB) algorithm, which updates iterates by solving a proximal subproblem. However, closed-form \emph{global} solutions to the subproblem  are only available for specific values of $p$, such as $p=1/2$ and $p=2/3$ \cite{xu2012l}, while for other values of $p$ and different regularization terms, solving the subproblems requires iterative methods \cite{liu2024bisection, won2023unified}, introducing additional computational overhead.
To address the inefficiency of solving nonconvex subproblems, several first-order methods based on smoothing techniques have been proposed. For instance, smoothing approximations \cite{chen2010smoothing,lai2013improved,lu2014iterative,wang2021relating}
such as  $|x_i|^p \approx (x_i^2+\epsilon)^{p/2}$, simplify the problem but often sacrifice accuracy, so that subproblems with a sequence of $\epsilon$ approaching  zero need to be solved. 
Another approach is the  \emph{iteratively reweighted} $\ell_1$ (IR$\ell_1$) method \cite{wang2021relating}, which approximates  the $\ell_p^p$ regularization  by a sequence of weighted $\ell_1$ norms. This method benefits from closed-form subproblem solutions via soft-thresholding and is easily applied to various nonconvex regularization problems. 
In addition, it exhibits \emph{support identification} properties, enabling it to rapidly detect zero components and preserving the sign of iterates near optimal solutions  \cite{wang2021relating, wang2023convergence, wang2022extrapolated}. Our approach builds upon these reweighting strategies while incorporating second-order information to further enhance convergence rate and solution quality.

{\bf Second-order methods.}  Second-order methods, such as proximal Newton methods \cite{lee2014proximal,yue2019family,mordukhovich2023globally}, offer a more robust framework for nonconvex optimization. However, relatively few second-order methods have been specifically designed for nonconvex sparse optimization problems. An early contribution by Chen \cite{chen2013optimality} introduced the Smoothing Trust Region Newton (STRN) method, which applies smoothing techniques to penalty functions and solves trust-region subproblems. While STRN guarantees global convergence to points satisfying affine-scaled second-order necessary conditions, it lacks local convergence analysis and often requires careful tuning of the smoothing parameter.
Recent advances have focused on subspace and manifold-based Newton methods \cite{bareilles2023newton, wu2023regularized, zhou2023revisiting}. These methods leverage support detection or manifold identification techniques to restrict optimization to a reduced subspace, thereby reducing the computational cost of Newton steps. For example, the Manifold Newton Method \cite{bareilles2023newton} employs proximal gradient steps to identify the manifold containing the optimal solution, followed by Newton updates restricted to that manifold.

{\bf Hybrid methods.}
 Recently, first-order methods augmented with second-order information gained significant attention. These methods rely primarily on first-order techniques in the early stages and employ second-order methods to improve local convergence. 
One closely related method is the Hybrid Proximal Gradient and Subspace Regularized Newton Method (HpgSRN) \cite{wu2023regularized}, considered a competing approach to the Manifold Newton Method \cite{bareilles2023newton}. This approach  alternates between proximal gradient steps and subspace Newton steps,  thereby combining the efficiency of first-order methods with the accuracy of second-order updates. In addition, it is shown to achieve local superlinear convergence under the Kurdyka-\L{}ojasiewicz (KL) property and a local error bound condition.
Similarly, the Proximal Semismooth Newton Pursuit (PCSNP) method \cite{zhou2023revisiting} employs support detection with a more aggressive criterion for identifying nonzero components, which can lead to solving an excessive number of unnecessary quadratic subproblems in the early stages.

\subsection{Contributions} 
We propose a novel hybrid approach, the subspace \textbf{I}teratively \textbf{Re}weighted $\ell_1$ and subspace \textbf{N}ewton \textbf{A}lgorithm (IReNA), to solve \eqref{p.prob} by alternating between solving a reweighted $\ell_1$ subproblem and a subspace Newton subproblem. The algorithm employs stationarity measures to differentiate between updating the zero and nonzero components of the current iterate, thereby optimizing over the most relevant subspace. Specifically, it first solves the reweighted $\ell_1$ subproblem within a subspace to determine whether the support of the iterates has stabilized. Once stabilization is detected, the algorithm switches to solving an (inexact) Newton subproblem restricted to the subspace of nonzero components for improving local convergence. We demonstrate the effectiveness of this approach through extensive experiments on diverse model prediction problems.  Compared to existing second-order methods, the proposed algorithm presents several advantages. The main novelties of IReNA, when compared with STRN \cite{chen2013optimality}, HpgSRN \cite{wu2023regularized} and PCSNP \cite{zhou2023revisiting}, include: 

\begin{enumerate}

\item[(i)] {\bf  General applicability to nonconvex regularization}: Our method employs an iteratively reweighted $\ell_1$ subproblem, making it applicable to a broad class of nonconvex regularization problems---beyond just specific $\ell_p$ norms. The subproblem admits a closed-form solution via the soft-thresholding operator, eliminating the need for subproblem solvers typically required for nonconvex proximal gradient subproblems.

\item[(ii)] 
{\bf Efficient subspace optimization}: 
Our approach optimizes within an adaptively chosen subspace. By evaluating the optimality residuals for both nonzero and zero components at each iteration, the algorithm dynamically adjusts the subspace to maximize progress while minimizing computational cost, particularly in the early stages.

\item[(iii)]  
{\bf Stable sign preservation}:  The algorithm guarantees that the iterates maintain their sign, keeping nonzero components uniformly bounded away from zero near an optimal solution. Once the signs stabilize, the algorithm smoothly transitions to a perturbed Newton method that optimizes a smoothed approximation of the original problem.

\item[(iv)] 
{\bf Comprehensive convergence guarantees}: We establish global convergence to a critical point  and demonstrate local superlinear convergence under the KL property. Furthermore, local quadratic convergence is achieved when the exact Newton step is employed. The convergence to second-order optimal solutions is also established when using a trust-region Newton subproblem.

\item[(v)]  {\bf Superior performance in experiments}: Extensive numerical experiments show that our algorithm consistently outperforms existing hybrid approaches that combine proximal gradient and Newton methods, achieving higher efficiency and improved solution quality.
\end{enumerate}

We summarize the comparison of convergence results for the related first- and second-order algorithms in Table \ref{tab.related}. 

\begin{table}[htbp]
  \centering
  \caption{Comparison of convergence results for state-of-the-art algorithms for solving \eqref{p.prob}.  (a) $f$ is Lipschitz differentiable on a bounded set. (b) $f$ is strongly smooth. (c) $f$ is Lipschitz twice differentiable on the support set. (d) Curve ratio condition. (e) KL property. (f) Positive definiteness of generalized Hessian. (g) $\nabla f$ is strongly semismooth on a bounded set. (h) $f$ is smooth and convex. (i) Nonsingular local minimizer.}
    \begin{tabular}{clll}
    \toprule
    Problems. & Algorithms  & Global convergence & Local convergence  \\
    \midrule
    \multirow{2}[2]{*}{$h(x) = \|x\|_p^p$, $p \in \{\frac{1}{2}, \frac{2}{3}\}$} & HpgSRN\cite{wu2023regularized} & (c)(d)(e) & superlinear, (e) \\
          & PCSNP\cite{zhou2023revisiting} & (a)   & quadratic, (f)(g) \\
    \midrule
    \multirow{2}[2]{*}{$h(x) = \|x\|_p^p$, $p \in (0,1)$} & IR$\ell_\alpha$\cite{lu2014iterative} & (b)  & - \\
          & EPIR$\ell_1$\cite{wang2022extrapolated} & (a),(e)  & linear/superlinear, (e) \\
    \midrule
    \multirow{4}[2]{*}{Generic $h(x)$} & AIR \cite{wang2021nonconvex} & (h)  & linear, (h) \\
          & STRN \cite{chen2013optimality} & (c)     &  - \\
          & \multirow{2}[1]{*}{IReNA (Ours)} & \multirow{2}[1]{*}{(a)} & superlinear, (e) \\
          &       &       & quadratic, (c)(i) \\
    \bottomrule
    \end{tabular}%
  \label{tab.related}%
\end{table}%

%
\subsection{Notation} \label{sec.preliminaries}
We denote by $\mathbb{R}^{n}$ the $n$-dimensional Euclidean space, and by $\mathbb{R}_{+}^n$ and $\mathbb{R}_{++}^n$ the sets of nonnegative and strictly positive real vectors, respectively.
For any $x, y\in \mathbb{R}^n$, we use $x_i$ to denote the $i$th component of $x$ and define $[n]:= \{1,2,3,\ldots,n\}$. 
Let $x\circ y$ denote the Hadamard product of $x$ and $y$ and   $x \leq y$  denote $x_i \leq y_i, \ \forall i\in[n]$. The index sets of nonzero and zero components of $x$ are defined as 
$\Ical(x) = \{ i \mid x_i \neq 0\}$ and $\Ical_0(x) = \{i\mid x_i = 0\}$, respectively.
For any subset $\Scal \subseteq [n]$, the subvector of $x$ containing the entries indexed by $\Scal$ is denoted by $x_{\Scal} \in \mathbb{R}^{|\Scal|}$.  The $\ell_p$ norm of a vector $x$ is given by $\|x\|_p := \left( \sum_{i=1}^n |x_i|^p \right)^{1/p}$, and  $\|\cdot\|$ represents the $\ell_2$ norm.

Consider a function $F:\mathbb{R}^{n} \to \mathbb{R}$. For any $x_{\Scal} \neq 0$, the partial gradient of $F$ over $x_{\Scal} $ is denoted as $\nabla_{\Scal} F(x)$, i.e. , ${\nabla_{\Scal} F(x) = \frac{\partial F}{\partial x_{\Scal}}} \in \mathbb{R}^{\vert \Scal \vert}$.  
We avoid denoting  $\nabla_i F(x) = [\nabla F(x)]_i$ for $i \in [n]$ since $F$ may not be differentiable at $x$. 
For any vector $x \in \mathbb{R}^{n}$,  the signum function $\sign: \mathbb{R}^n\to\mathbb{R}^n$    yields a vector whose components are the signum of the individual components of $x$,  $\textrm{sgn}(x_i)=1$ if $x_i > 0$,  $\textrm{sgn}(x_i)=-1$ if $x_i < 0$ and $\textrm{sgn}(x_i)=0$ if $x_i = 0$.  The weighted soft-thresholding operator is defined as $[\mathbb{S}_{\omega}(v)]_i:= \textrm{sgn}(v_i)\max(|v_i| - \omega_i,0)$ for any $v\in\mathbb{R}^n$ and $\omega \in \mathbb{R}^n_{++}$. 
The projection operator \( \mathbb{P}(y;x) \) projects \( y \) onto the subspace orthant containing \( x \), defined as:
$[\mathbb{P}(y;x)]_i := \sign(x_i) \max\{0, \sign(x_i) y_i\}.$
Define the ball $\mathcal{B}(x,\rho):=\{y \in \mathbb{R}^n\mid  \|y-x\|\leq \rho\}$ with $\rho>0$. If function $f: \mathbb{R}^n\rightarrow \mathbb{R} \cup \{+\infty\}$ is convex, then the subdifferential of $f$ at $\bar{x}$ is given by
$\partial f(\bar{x}) := \{z \mid f(\bar{x}) + \langle z, x - \bar{x}\rangle \leq  f(x),\ \forall x \in \mathbb{R}^n \}.$
For a set $S$, the relative interior $\textrm{rint}(S)$ is defined as
$\textrm{rint}(S) := \{x \in S : \text{there exists } \rho > 0\text{ such that } \Bcal(x,\rho) \cap \textrm{aff}(S) \subseteq S\}$
where aff($S$) is the affine hull of $S$.

\section{Proposed IReNA}\label{sec.algorithm}

We first recall the definition of the first-order stationary points of \eqref{p.prob}.
\begin{definition}[Stationary point {\cite{wang2021nonconvex}}]\label{Def_StatioanaryPoint}
    We say that a vector $x^{*} \in \mathbb{R}^{n}$ is a first-order stationary point of \eqref{p.prob} if 
    \begin{equation}\label{eq.optimalcondition}
      0 \in \nabla_i f(x^{*})+\lambda \partial r(|x_i^{*}|),\ \forall i\in [n].
    \end{equation}
\end{definition}

To formulate the subproblem in our proposed algorithm, we define a smooth and locally Lipschitz continuous approximation:
\begin{equation}\label{eq.relaxedlp}
	\begin{aligned}
		& \underset{x\in \mathbb{R}^n}{\textrm{min}}
		& & F(x; \epsilon) := f(x)+ \lambda\sum_{i=1}^n r(|x_i|+\epsilon_i),
	\end{aligned}
\end{equation}
where $\epsilon \in \mathbb{R}^n_{++}$. Clearly $F(x) = F(x;0)$. 
A point $x \in \mathbb{R}^{n}$ is a first-order stationary point of \eqref{eq.relaxedlp} if 
\begin{equation}\label{eq.relaxedopcondition2}
 \begin{aligned}
    &|\nabla_i f( x) | \le  \omega_i,  \ &i\in \Ical_0( x), \\
    &\nabla_i f( x) +  \omega_i \textrm{sgn}( x_i) = 0, &i\in\Ical(x),
    \end{aligned}
\end{equation}
where {   $ \omega_i =  \omega(x_i,\epsilon_i) = \lambda r'(| x_i|+\epsilon_i),\ \forall i \in [n]$}.  
The relationship between \eqref{eq.optimalcondition} and \eqref{eq.relaxedopcondition2} is as follows: \eqref{eq.relaxedopcondition2} holds at $x$ with $\epsilon_i = 0, \forall i \in \Ical(x)$ if and only if \eqref{eq.optimalcondition} holds at $x$. 
At the $k$th iteration, we define a locally weighted $\ell_1$ regularized function as
\begin{equation}\label{eq.Gk}
    \begin{aligned} G_{k}(x) = f(x)+ \sum_{i=1}^n \omega^k_i|x_i|,
    \end{aligned}
\end{equation}
where $\omega^k_i=  \omega(x_i^k,\epsilon_i^k) 
, \forall i \in [n]$. 
We say that $x \in \mathbb{R}^{n}$ is a first-order stationary point of problem \eqref{eq.Gk} if
\begin{equation}\label{eq.Gkopcondition}
    0 \in \nabla_i f(x) + \omega^k_i \partial |x_i|, \ \forall i \in [n].
\end{equation}
The following lemma establishes the relationship   between the optimality of $G_{k}(x)$ and those of $F(x;\epsilon^k)$.  

\begin{lemma}\label{lem.FG}  
Consider \eqref{eq.relaxedlp} and \eqref{eq.Gk}. For any $(x^k, \epsilon^k), (x^{k+1}, \epsilon^{k+1}) \in \mathbb{R}^n \times \mathbb{R}_{++}^n$ , it holds for any $\epsilon^{k+1} \le \epsilon^k$ that 
    \begin{equation} \label{lem.dec0}
        F(x^{k+1}; \epsilon^{k+1}) - F(x^k; \epsilon^{k}) \le F(x^{k+1};\epsilon^k) - F(x^k; \epsilon^{k}) \leq G_k(x^{k+1})-G_k(x^k).
    \end{equation}
 Moreover, the following statements are equivalent:
        \begin{itemize}
            \item[(i)] $x^k$ is first-order stationary for $G_k(\bx)$.
            \item[(ii)] $x^k$ is  first-order stationary for $F(\bx; \epsilon^{k})$.
            \item[(iii)] $x^k =\mathbb{S}_{\omega^k }(x^k - \nabla f(x^k))$.
        \end{itemize}
\end{lemma}
\begin{proof} 
  The first inequality of \eqref{lem.dec0} is obvious. The second inequality is true since 
 $$   r(|x_i^{k+1} |  +\epsilon_i^{k})- r(|x_i^{k} |  +\epsilon_i^{k}) \le \omega^k_i(|x_i^{k+1}| - |x^k_i|)$$ 
  by the concavity of $r(|\cdot|)$ over $\mathbb{R}_{++}$.  Moreover, the equivalence holds true by noting that
\begin{equation*}
\begin{aligned}
    (i) \overset{\eqref{eq.Gkopcondition}}{\iff} &\ -\nabla f(x^k) \in (\omega^{k}\circ \partial\Vert x^k \Vert_1) 
    \iff \   \left(x^k -\nabla f(x^k)\right)\in (x^k +  \omega^{k} \circ \partial \Vert x^k\Vert_1) \\
     \iff & \  (iii) 
     \iff \  \eqref{eq.relaxedopcondition2} \textrm{ is satisfied}
     \iff \ \textrm{(ii). } 
 \end{aligned} 
\end{equation*}
\end{proof}


\subsection{Optimality Measure and Subspace Determination}

Our approach solves different subproblems based on the optimality residuals associated with the current zero and nonzeros. 
For this purpose,    we further partition the support of $x$ into 
$  \Ical_+(x):= \{ i \mid x_i > 0\} $ and $ \Ical_-(x):= \{i \mid x_i < 0\}$. %
Motivated by \cite{chen2017reduced}, we introduce two optimality residuals, $\Psi(x; \epsilon)$ and $\Phi(x;\epsilon)$, corresponding to the residuals for the
 zeros and nonzeros at $(x, \epsilon)$, respectively. These definitions also takes into account how far nonzero components might shift before they turn to zero. 

\begin{equation}\label{eq:zerosmeasure}
    [\Psi(x; \epsilon)]_i := 
    \begin{cases}
    \nabla_if(x)+\omega_i,& \textrm{if } i \in \Ical_0(x) \textrm{ and 
 }  \nabla_i f(x)+\omega_i < 0,\\
    \nabla_if(x)-\omega_i,& \textrm{if } i \in \Ical_0(x)  \textrm{ and } \nabla_i f(x) - \omega_i > 0,\\
    0, & \textrm{otherwise,}
    \end{cases}
\end{equation}
\begin{equation}\label{eq:nonzerosmeasure}
   \begin{aligned} & 
    [ \Phi(x; \epsilon)]_i := \\ 
    & \begin{cases}
    0, &\textrm{if } i \in \Ical_0(x),\\
    \min\{\nabla_i f(x)+\omega_i, \max \{x_i, \nabla_i f(x)-\omega_i\}\}, & \textrm{if } i \in \Ical_+(x) \text{ and } \nabla_i f(x)+\omega_i > 0, \\
    \max\{\nabla_i f(x)-\omega_i, \min \{x_i, \nabla_i f(x)+\omega_i \}\}, & \textrm{if } i \in \Ical_-(x) \text{ and } \nabla_i f(x)-\omega_i < 0, \\
    \nabla_i f(x) + \omega_i\cdot \textrm{sgn}(x_i), & \textrm{otherwise.}
    \end{cases}
    \end{aligned} 
\end{equation}

We abbreviate $\Phi(x^k;\epsilon^k)$ and $\Psi(x^k;\epsilon^k)$ as $\Phi^k$ and $\Psi^k$ , $\Ical(x^k)$ and $\Ical_0(x^k)$ as $\Ical^k$ and $\Ical_0^k$ respectively. 
The following lemma summarizes their properties.

\begin{proposition}\label{prop:optimility}
Consider \eqref{eq:zerosmeasure} and \eqref{eq:nonzerosmeasure}. For any $(x, \epsilon) \in \mathbb{R}^n \times \mathbb{R}^n_{++}$ and  $\omega = \omega(x, \epsilon)$, the following statements hold.
\begin{itemize}
    \item[(i)] $ |[\Phi(x; \epsilon)]_i |  \le \left| \nabla_i  F(x; \epsilon) \right|,\ i\in\Ical(x). $
   \item[(ii)] Let $d(\beta) :=  \mathbb{S}_{\beta   \omega }(x -  \beta  \nabla f(x)) - x$ for $\beta > 0$, then      
    $$d(1) = -\left[\Psi(x; \epsilon) + \Phi(x; \epsilon)\right]. $$ 
    More generally,  
\begin{subequations}
    \begin{align}
        d_i(\beta)  &= - \beta  [\Psi(x ; \epsilon )]_i, \qquad\qquad\    i \in \Ical_0(x ),  \label{le.dk1} \\
         |d_i(\beta)|   & \ge  \min\{\beta ,1\} | [\Phi(x ; \epsilon )]_i |, \quad  i \in \Ical(x ). \label{le.dk2}
    \end{align}
\end{subequations}
    \item[(iii)] 
	  $x$ satisfies the first-order stationary condition  \eqref{eq.relaxedopcondition2} if $ \|\Phi(x; \epsilon)\|=\|\Psi(x; \epsilon)\|=0. $
	 Additionally, if $\epsilon_i = 0$, $i\in\Ical (x) $, then this implies that $x$ satisfies the first-order stationary condition \eqref{eq.optimalcondition}. 
 \end{itemize}
\end{proposition}
\begin{proof} 
(i) We consider the last three cases in \eqref{eq:nonzerosmeasure}. For $i \in \Ical_+(x)$, note that by \eqref{eq.relaxedopcondition2} we have $\nabla_i F(x;\epsilon) = \nabla_i f(x) + \omega_i$. When $i \in \Ical_+(x) \textrm{ and }\nabla_i f(x)+\omega_i > 0$, we also have that if $\nabla_i f(x) -\omega_i<0$, then $[\Phi(x;\epsilon)]_i =\min\{ \nabla_i f(x) + \omega_i, x_i\} \le \nabla_i f(x) + \omega_i$. Otherwise $[\Phi(x; \epsilon)]_i =\min\{ \nabla_i f(x) + \omega_i,  \nabla_i f(x) - \omega_i\} \le \nabla_i f(x) + \omega_i$. As for the last case, it is obvious that $[\Phi(x; \epsilon)]_i = \nabla_i f(x) + \omega_i =  \nabla_i F(x; \epsilon)$.  On the other hand, similar arguments can be applied when $i \in \Ical_-(x) \textrm{ and }\nabla_i f(x)-\omega_i < 0$. Overall,  we have shown that (i) holds. 

(ii) The proof of the first statement can be found in \cite[Lemma A.1]{chen2017reduced}. It indicates that 
$[\Psi(x ; \epsilon )]_i = - d_i(1)$, $i\in\Ical_0(x)$ and $[\Phi(x ; \epsilon )]_i = - d_i(1)$, $i\in\Ical(x)$ by the complementary of $\Psi$ and $\Phi$. 
This, together with Lemma \ref{lem.ist.ist} shown in Appendix, implies \eqref{le.dk1} and \eqref{le.dk2}.  

(iii) The statement holds true by noting that
\begin{equation*}
    \|\Phi(x; \epsilon)\|=\|\Psi(x; \epsilon)\|=0 \iff d(1)=0 \iff  \textrm{ Lemma \ref{lem.FG}(iii) holds }.
\end{equation*} Moreover, if $\epsilon_i = 0, i\in \Ical(x)$, then $x$ satisfies the first-order stationary condition \eqref{eq.optimalcondition}. 

\end{proof}

One reason for employing different optimality residuals for zero and nonzero components is that our approach is a subspace method of minimizing   \emph{subsets} of zero and nonzero components at each iteration. 
These two residuals are used as the ``switching sign'' for choosing which subsets to update. 
Specifically, at each iteration, the subset $\Wcal_k$ is chosen to satisfy
\begin{subequations}\label{eq.chooseW}
    \begin{align}
        \Wcal_k \subseteq \{i:[\Psi^k]_i\neq 0\} \textrm{ and } \|[\Psi^k]_{\Wcal_k}\|\geq \eta_{\Psi}\|\Psi^k\| & \textrm{  if  }\|\Psi^k\| \ge \eta \|\Phi^k\|, \label{eq.chooseW1}\\
        \Wcal_k \subseteq \{i:[\Phi^k]_i\neq 0\} \textrm{ and } \|[\Phi^k]_{\Wcal_k}\|\geq \eta_{\Phi}\|\Phi^k\| & \textrm{  if  }\|\Psi^k\| < \eta \|\Phi^k\|. \label{eq.chooseW2}
    \end{align}
\end{subequations}
with prescribed $\{\eta_\Psi,\eta_\Psi\} \in (0,1)$ and $\eta\in(0,+\infty)$. 
The first case \eqref{eq.chooseW1} indicates that the zeros  have relatively greater impact on the optimality error than the nonzeros and the algorithm solves a subproblem consisting of the zero so that $\Wcal_k \subseteq \Ical_0^k$, and vice versa.  
Obviously, $\|\Phi^k\|\neq 0 $ or $\|\Psi^k\| \neq 0$ implies that $\mathcal{W}_{k} \ne \emptyset$.

\subsection{Subspace Proximal Weighted $\ell_1$ Regularized Subproblems} \label{sec.ist}
To identify the correct support, we first solve 
a proximal-gradient subproblem for $G_k$ restricted to a subset $\Wcal$ of the variables 
\begin{equation}\label{ist} 
    \begin{aligned} 
       \underset{x_\Wcal \in \mathbb{R}^{|\Wcal|}}{ \min}\ 
       \!\langle \nabla_\Wcal f(x^k), x_\Wcal\rangle  + \tfrac{1}{2\beta } \|x_\Wcal - x^k_\Wcal \|_2^2 + \sum_{i\in \Wcal} \omega^k_i |x_i|,   
    \end{aligned}
\end{equation}
which has a closed-form solution $[ \mathbb{S}_{ \beta \omega^k }(x^k -  \beta \nabla f(x^k))]_\Wcal$. 

For sufficiently small $\beta>0$, subproblem \eqref{ist} renders a descent direction for $G_k$ and, consequently, for $F$ (as is shown in the convergence analysis). Therefore, a backtracking strategy is employed when solving \eqref{ist} to
find an appropriate $\beta$ that ensures a sufficient decrease. 
The solution of this subproblem  is detailed in subroutine  \ref{alg.IST} and is referred to as the IST (iterative soft-thresholding) step. 
For simplicity, we omit the iteration counter $k$ in the outer loop within subroutine \ref{alg.IST}. 

\begin{algorithm}[htbp]
	\caption{IST step: $[y^j,\beta_j,\Delta_{\textrm{IST}}] := \textrm{IST}(x,\omega,\Wcal; \xi_{\beta} ,\gamma_\beta,\bar{\beta})$}
         \label{alg.IST}
	\begin{algorithmic}[1]
        \REQUIRE $\{x,\omega\}\in \mathbb{R}^n$, $\Wcal \subseteq [n]$; $\{\bar{\beta},\xi_{\beta} \} \in (0,+\infty)$ and $ \gamma_\beta \in (0,1)$.
        \STATE Set $\beta_0\leftarrow \bar{\beta}$.
        \FOR {$j = 0,1,2,\ldots$} \label{IST.line  earch1}
            \STATE Set $y_{{\Wcal^c}}^{j} \leftarrow x_{{\Wcal^c}}$,   $y^{j}_{\Wcal} \leftarrow [\mathbb{S}_{\beta_{j} \omega}(x- \beta_{j} \nabla f(x))]_{\Wcal}$.\label{IST.updateX}
            \IF {$G_{k}(y^j) \leq G_{k}(x) - \frac{\xi_{\beta} }{2}\| y^j - x\|^2$}\label{IST.condTerminate}
                \RETURN $y^j$ and $\Delta_{\textrm{IST}} \leftarrow G_{k}(x) - G_{k}(y^j)$ .
            \ENDIF
            \STATE Set $\beta_{j+1} \leftarrow \gamma_\beta \beta_{j}$. \label{IST.updatemu}
        \ENDFOR \label{IST.line  earch2}
        \end{algorithmic}
\end{algorithm}

The following lemma proves that the backtracking line   search in  Algorithm \ref{alg.IST}  terminates in a finite number of steps.

\begin{lemma}\label{lem.iststepsize} Let $(x,\epsilon)\in \mathbb{R}^{n}\times  \mathbb{R}_{++}^{n}, \omega = \omega(x,\epsilon)$ and $\Wcal \subseteq [n]$. Suppose Algorithm \ref{alg.IST} is invoked with $(\hat{y}, \hat{\beta},\hat\Delta_{\textrm{IST}}) = \mathrm{IST} (x,\omega,\Wcal; \xi_{\beta} ,\gamma_\beta,\bar{\beta})$. Then Algorithm \ref{alg.IST} terminates in a finite number of iterations and satisfies
	\begin{equation}\label{add.back} F(\hat y ; \epsilon) - F(x; \epsilon) 
	\le -  \frac{\xi_{\beta} }{2}\|\hat y- x\|^2,\end{equation} 
  along with the step size bound $ \min\{\bar{\beta},\frac{\gamma_\beta}{L_1(x) + \xi_{\beta} }\} \le \hat\beta  \le \bar\beta$,  
where $L_1(x) > 0$  is the local Lipschitz constant of $\nabla f$ in a neighborhood of $x$ containing $\hat y$.  
\end{lemma}
\begin{proof}
    For any   $j$,  $y_{{\Wcal^c}}^j = x_{{\Wcal^c}}$  and $y_\Wcal^j = [ \mathbb{S}_{ \beta_j \omega }(x -  \beta_j \nabla f(x))]_\Wcal$. 
    Since 
    $x_\Wcal(\beta)  := [ \mathbb{S}_{ \beta \omega }(x -  \beta \nabla f(x))]_\Wcal$ is continuous with respect to $\beta \in [0, \bar \beta]$.
    Therefore,  $y^j$ lies on the line  segment connecting $x$  and $\bx(\hat \beta)$, on which $f$ is Lipschitz differentiable with constant $L_1(x)>0$.  
     It follows that  for any $\beta_j \le\min\{\bar\beta, \tfrac{1}{L_1(x)+\xi_{\beta} }\}$, 
        \begin{equation*}
        \begin{aligned}
            G(y^j) &\leq f(x) + \langle \nabla f(x), y^j - x\rangle + \frac{L_1(x)}{2} \|y^j - x\|^2 + \sum_{i \in \Wcal} \omega_i |y^j_i| \\
            &\leq f(x) + \langle \nabla f(x), y^j - x\rangle + \frac{1}{2\beta_j} \|y^j - x\|^2 + \sum_{i \in \Wcal} \omega_i |y^j_i| - \frac{\xi_{\beta} }{2}\|y^j - x\|^2 \\
            &\leq  f(x) + \sum_{i \in \Wcal} \omega_i |x_i| - \frac{\xi_{\beta} }{2}\|y^j - x\|^2 \\
            & = G(x) - \frac{\xi_{\beta} }{2}\|y^j - x\|^2, 
        \end{aligned}
    \end{equation*}
    where the last inequality follows from the optimality of $y^j$ for subproblem \eqref{ist}.  
   Therefore, the backtracking  terminates with $\hat \beta \ge  \min\{\bar{\beta},\frac{\gamma_\beta}{L_1(x) + \xi_{\beta} }\}$.  This further implies \eqref{add.back} as a consequence of \eqref{lem.dec0}.
\end{proof}
\subsection{Subspace Quadratic Subproblems} 



To accelerate local convergence, we solve a quadratic subproblem over a subset of nonzeros,  $\Wcal_k \subseteq \Ical^k$. The subproblem is formulated as
\begin{equation}\label{sub.qp}
	\tilde{d}^k \approx \min_{d \in \mathbb{R}^{|\Wcal_k|}}  m_k(d):= \langle g^k, d\rangle +\frac{1}{2}\langle d, H^kd\rangle,
\end{equation}
where $g^k = \nabla_{\Wcal_{k}} F(x^{k};\epsilon^{k})$ is the subspace gradient of $F(x^k; \epsilon^k)$, and $H^k = \nabla^2_{\Wcal_k\Wcal_k}F(x^k; \epsilon^k) + \zeta_k I \succ 0$ is the modified subspace Hessian with a regularization parameter $\zeta_k>0$.

Since $m^k(d)$ is defined within the subspace $\mathbb{R}^{|\Wcal_k|}$, its dimension may be small. An inexact solution $\tilde{d}^k$ is considered acceptable if it satisfies
\begin{equation}\label{eq.imposedCond}
    \langle g^k, \tilde{d}^k\rangle \leq \langle g^k, d^k_R\rangle  \textrm{ and } m_k(\tilde{d}^k) \leq m_k(0).
\end{equation}
where the reference direction $d^k_R := -\frac{\|g^k\|^2}{\langle g^k, H^k g^k \rangle} g^k$.
The subproblem can be easily solved using existing efficient quadratic programming solvers, e.g., Conjugate Gradient (CG) method. 
After (inexactly) solving the QP subproblem, we establish the following result, which is identical to \cite[Lemma 3]{chen2017reduced}. The proof is omitted for brevity.
\begin{lemma}\label{lem.dbound} 
Consider the subspace QP subproblem \eqref{sub.qp} with $H^k \succ 0$.  If $\tilde{d}^k$ satisfies \eqref{eq.imposedCond}, then the following inequalities hold 
\begin{align}
    \langle g^k, \tilde{d}^k\rangle & \leq  \langle g^k, d_R^k\rangle < 0,  \label{eq.xij2}\\ 
    | \langle g^k, \tilde{d}^k\rangle | &\ge   | \langle g^k, d_R^k\rangle |  =  \frac{\|g^k\|^2}{\langle g^k, H^k g^k\rangle} \|g^k\|^2  \geq \frac{\|g^k\|^2}{ {\lambda}_{\max}(\bH^k)}, \label{eq.xij2.2}\\
     \frac{\|g^k\|}{ {\lambda}_{\max}(\bH^k) }  & \le \|\tilde{d}^k\|\leq    \frac{2\|g^k\|}{{\lambda}_{\min}(\bH^k)}.   \label{eq.xij2.3}
 \end{align} 
\end{lemma}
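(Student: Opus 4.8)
The plan is to first pin down the reference direction $d_R^k$ explicitly and then read off each of the three estimates using only the defining properties of $\bar d^k$ together with elementary eigenvalue bounds for the positive definite $H^k$. Since $d_R^k$ is obtained by minimizing $m^k$ along the steepest descent direction $-g^k$, I would substitute $d = -t g^k$ into $m^k$ to obtain the scalar function $-t\|g^k\|^2 + \tfrac{t^2}{2}\langle g^k, H^k g^k\rangle$, whose unique minimizer is $t^\star = \|g^k\|^2/\langle g^k, H^k g^k\rangle$ (well-defined and positive, since $H^k$ positive definite forces $\langle g^k, H^k g^k\rangle > 0$ whenever $g^k \neq 0$). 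This gives $d_R^k = -t^\star g^k$ and hence the exact value $\langle g^k, d_R^k\rangle = -\|g^k\|^4/\langle g^k, H^k g^k\rangle$, which is strictly negative. Combining with the Rayleigh bound $\langle g^k, H^k g^k\rangle \le \lambda_{\max}(H^k)\|g^k\|^2$ then yields the equality and the final inequality in \eqref{eq.xij2.2}.

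Next, \eqref{eq.xij2} would follow immediately: the inexactness condition imposed on the subproblem solve already guarantees $\langle g^k, \bar d^k\rangle \le \langle g^k, d_R^k\rangle$, and I have just shown the right-hand side is negative. Taking absolute values of this same inequality (both sides being negative) also delivers the first inequality $|\langle g^k, \bar d^k\rangle| \ge |\langle g^k, d_R^k\rangle|$ in \eqref{eq.xij2.2}.

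For the two-sided norm bound \eqref{eq.xij2.3}, I would treat the two sides separately. The lower bound is the cleaner one: by Cauchy--Schwarz $|\langle g^k, \bar d^k\rangle| \le \|g^k\|\,\|\bar d^k\|$, and combining this with the lower estimate $|\langle g^k, \bar d^k\rangle| \ge \|g^k\|^2/\lambda_{\max}(H^k)$ already obtained in \eqref{eq.xij2.2} and dividing by $\|g^k\|$ gives $\|\bar d^k\| \ge \|g^k\|/\lambda_{\max}(H^k)$. For the upper bound I would invoke the acceptance criterion $m^k(\bar d^k) \le m^k(0) = 0$, which rearranges to $\tfrac12\langle \bar d^k, H^k \bar d^k\rangle \le -\langle g^k, \bar d^k\rangle = |\langle g^k, \bar d^k\rangle|$; bounding the left side below by $\tfrac12 \lambda_{\min}(H^k)\|\bar d^k\|^2$ and the right side above by $\|g^k\|\,\|\bar d^k\|$ via Cauchy--Schwarz, then cancelling one factor of $\|\bar d^k\|$, produces $\|\bar d^k\| \le 2\|g^k\|/\lambda_{\min}(H^k)$.

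I do not expect any genuine obstacle here; this is a routine convex-quadratic estimate. The only points requiring mild care are keeping the signs straight when passing between $\langle g^k, \cdot\rangle$ and its absolute value, and observing that every bound relies on $H^k$ being positive definite, so that $\lambda_{\min}(H^k) > 0$ and the Rayleigh quotient stays positive. The degenerate case $g^k = 0$ is trivial, since then $\bar d^k = 0$ satisfies all three relations vacuously.
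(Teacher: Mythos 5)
Your proof is correct, and for the one estimate the paper considers nontrivial it takes a genuinely different route. Like the paper, you treat \eqref{eq.xij2} and \eqref{eq.xij2.2} as nearly immediate consequences of the explicit Cauchy-point formula for $d_R^k$, the acceptance condition $\langle g^k,\bar d^k\rangle \le \langle g^k, d_R^k\rangle$, and the Rayleigh bound, and you obtain the lower bound in \eqref{eq.xij2.3} the same way the paper does (Cauchy--Schwarz against \eqref{eq.xij2.2}). The difference is in the upper bound $\|\bar d^k\|\le 2\|g^k\|/\lambda_{\min}(H^k)$: the paper introduces the exact Newton step $d_N^k=-(H^k)^{-1}g^k$, bounds $\|d_N^k\|$, shows $\bar d^k - d_N^k$ lies in the level set $\{d:\, m^k(d_N^k+d)\le 0\}$ of the shifted quadratic, bounds the radius of that level set, and finishes with the triangle inequality. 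You instead exploit the acceptance test directly: $m^k(\bar d^k)\le m^k(0)=0$ gives $\tfrac12\langle \bar d^k, H^k\bar d^k\rangle \le -\langle g^k,\bar d^k\rangle$, and then one Rayleigh bound on the left and one Cauchy--Schwarz on the right, followed by cancelling a factor of $\|\bar d^k\|$, yields the same constant $2$. Your argument is shorter, avoids any use of $(H^k)^{-1}$, and loses nothing for the purposes of this lemma; what the paper's detour buys is the auxiliary localization $\|\bar d^k - d_N^k\|\le \|g^k\|/\lambda_{\min}(H^k)$, i.e., a quantitative statement that the inexact step stays near the exact Newton step, which is conceptually aligned with the later local analysis even though it is not reused there. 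One shared caveat, which you flag and the paper does not: when $g^k=0$ the strict inequality in \eqref{eq.xij2} cannot hold (and $d_R^k$ is not even well defined), so both proofs implicitly assume $g^k\neq 0$.
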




\subsection{Projected Line  Search} \label{sec.proj.ls} 
Once a descent direction for $F$ is obtained by solving the QP subproblem, the projected line search subroutine (Algorithm \ref{alg.pls}) is invoked to determine a stepsize $\alpha$ that ensures a sufficient decrease in \( F(x; \epsilon) \). For clarity, we omit the iteration counter superscript $k$ in the outer loop of Algorithm \ref{alg.pls}.

Algorithm \ref{alg.pls} performs a backtracking line search along the direction \( d \) to determine a stepsize \( \alpha_j \) such that the projected update \( \Proj(x + \alpha_j d; x) \) leads to a reduction in \( F \).
When the condition \( \sign(y^j) = \sign(x) \) holds for the first time, it indicates that for any smaller stepsize $0<\alpha < \alpha_j$,  $\sign(\mathbb{P}(x+\alpha d;x)) = \sign(x)$ holds. In this case, the algorithm backtracks to check the largest stepsize \( \alpha_B := \arg\sup\{\alpha > 0: \sign(x + \alpha d) = \sign(x)\} \), producing a new iterate with the least support change compared to \( x \).

Since the algorithm is guaranteed to encounter the same sign as \( x \) within a finite number of trials, it subsequently reduces to a standard backtracking line search. Therefore, it is generally known that the overall procedure  terminates in a finite number of iterations.

\begin{algorithm}
	\caption{Projected line  search: $[y^j,\alpha_j,\Delta_{\textrm{QP}}] := \textrm{PLS}(x, \epsilon,d,\Wcal; \xi_{\alpha} , \gamma_\alpha)$}
	\label{alg.pls}
	\begin{algorithmic}[1]
		\REQUIRE $\{x,d, \epsilon\}\in \mathbb{R}^n$, $\Wcal \subseteq \mathcal{I}(x)$; $\xi_{\alpha}  \in (0,+\infty)$ and $\gamma_\alpha \in (0,1)$.
		\STATE Set $\alpha_0 \leftarrow 1$ and $\textrm{Flag} = \texttt{True}$. \label{line.proj1}
		\FOR{$j = 0,1,2,...$}
            \STATE Set $y^j\leftarrow \mathbb{P}(x+\alpha_j d;x)$.
            \IF{ $\sign(y^j) = \sign(x)$ and Flag }\label{line.alphaB}
                \STATE Set $\alpha_B \leftarrow  \arg\sup\{\alpha >0: \textrm{sgn}(x+\alpha d)= \textrm{sgn}(x)\}$ and  Flag $=$ \texttt{False}.
                \STATE Set $ \alpha_j \leftarrow \min\{1,\alpha_B\}.$
                \STATE Set $y^j\leftarrow \mathbb{P}(x+\alpha_j d;x)$.
            \ENDIF
            \IF{$F(y^j; \epsilon)\leq F(x; \epsilon) - \frac{\xi_{\alpha}}{2} \|y^j - x\|^2$} \label{line.nonincreasingF} 
		    \RETURN $y^j$ and $\Delta_{\textrm{QP}} \leftarrow F(x; \epsilon) - F(y^j; \epsilon)$.  \label{line.return1}
		    \ENDIF
            \STATE Set $\alpha_{j+1} = \gamma_\alpha\alpha_{j}$.
        \ENDFOR
	\end{algorithmic}
\end{algorithm}

{
\begin{lemma}\label{lem.SD}
Let $(x,\epsilon,d)\in \mathbb{R}^{n} \times \mathbb{R}_{++}^{n} \times \mathbb{R}^{|\Wcal|}$, $\Wcal \subseteq \Ical(x)$ such that $\langle \nabla_{\Wcal} F(x;\epsilon), d\rangle < 0$. Suppose Algorithm \ref{alg.pls} is invoked with $(\tilde{y},\tilde{\alpha},\tilde \Delta_{\textrm{QP}}) =  \mathrm{PLS}(x, \epsilon,d,\Wcal; \xi_{\alpha} , \gamma_\alpha)$. 
Then Algorithm \ref{alg.pls} terminates finitely satisfying
\begin{equation}\label{pls.des}
    F(\tilde{y};\epsilon) - F(x;\epsilon) \leq -\frac{\xi_{\alpha}}{2}\|\tilde{y}-x\|^2.
\end{equation}
Moreover, if $\textrm{sgn}(\tilde{y}) \ne \textrm{sgn}(x)$, then
\begin{equation}\label{reduce.support.1} 
	 1 \geq \tilde{\alpha} \geq \min\{\alpha_B,1\} \  \textrm{ and }\  \Ical(\tilde{y}) \subset \Ical(x).
\end{equation}
Otherwise, $F(x; \epsilon)$ is  $L_2(x;\epsilon)$-Lipschitz differentiable on the  line segment $[x,x+\alpha_B d)$ and
\begin{equation}\label{reduce.support.2.orig} 
    \min\{1,\alpha_B\} \geq \tilde{\alpha}  \ge  \min\{1, \frac{-2 \gamma_\alpha\langle \nabla_{\Wcal} F(x;\epsilon), d\rangle }{ (L_2(x;\epsilon)+\xi_{\alpha})\|  d \|^2} \}\  \textrm{ and }\  \Ical(\tilde{y}) = \Ical(x).
    \end{equation} 
\end{lemma}
}
\begin{proof} 
We restrict the analysis to $\mathbb{R}^{|\Wcal|}$, since $d_{\Wcal^c} = 0$, and omit the subscript $\Wcal$ for brevity.
If $\textrm{sgn}(\tilde{y}) \ne \textrm{sgn}(x )$, then naturally \eqref{pls.des}  holds and $\tilde{y}$ is on the boundary of the subspace orthant containing $x$, which means $\Ical(\tilde{y})  \subset  \Ical(x)$. 
If $\textrm{sgn}(\tilde y ) = \textrm{sgn}(x )$, then line \ref{line.alphaB} is  triggered and there are no points of nondifferentiability of $F(x;\epsilon)$ in subspace $\mathbb{R}^{|\Wcal|}$ in the line segment connecting $x $ to $x  + \alpha_B d$. We end up with a traditional backtracking line  search with $F(x; \epsilon)$ being $L_2(x;\epsilon)$-Lipschitz differentiable on the  line segment $[x,x+\alpha_B d)$,
 \begin{equation*}
 \begin{aligned}
     F(x + \alpha_j d; \epsilon) - F(x; \epsilon ) &\leq \alpha_j\langle  \nabla F(x; \epsilon ),  d \rangle + \frac{1}{2} L_2(x;\epsilon)  \| \alpha_j d \|^2\\
     & \leq -\frac{\xi_{\alpha}}{2} \|\alpha_j d\|^2  = -\frac{\xi_{\alpha}}{2} \|y^j - x\|^2,
 \end{aligned}
\end{equation*} 
where the second inequality is satisfied for any $\alpha_j \in \bigg(0, \frac{-2\langle \nabla F(x;\epsilon), d\rangle }{ (\xi_{\alpha} + L_2(x;\epsilon))\|  d \|^2} \bigg].$ Therefore, the line  search terminates with $\tilde{\alpha}$   satisfying \eqref{reduce.support.2.orig} and $\Ical(\tilde{y})  =  \Ical(x)$.    
\end{proof} 

\subsection{Main Algorithm}

{
With the two subproblems clearly defined, we now formalize the main algorithm in Algorithm \ref{algo.IReNA}. At each iteration, we first solve the IST subproblem to obtain $\hat x^k$. The condition $\sign(\hat x^k) = \sign(x^k)$ in line \ref{line.iflocal} serves as a potential indicator of stability before proceeding to the QP subproblem.

By the construction of $\Psi^k$ and $\Phi^k$, the selected subset $\Wcal_k$ is guaranteed to satisfy one of the following cases: $\Wcal_k \subseteq \Ical_0^k$, as determined by \eqref{eq.chooseW1}; $\Wcal_k \subseteq \Ical^k$, as determined by \eqref{eq.chooseW2}. 
If $\Wcal_k \subseteq \Ical_0^k$, then $\hat x^k = x^k + d(\hat \beta_k) = d(\hat \beta_k) \neq 0$ by \eqref{le.dk1} , the IST subproblem  induces sign changes, leading to $\Ical^k \subset \Ical(\hat x^k)$. Consequently, line \ref{line.iflocal} always evaluates to ``\texttt{False}", preventing the QP subproblem from being triggered.
On the other hand, if $\Wcal_k \subseteq \Ical^k$, we have $\Ical(\hat x^k) \subseteq \Ical^k$, ensuring that the QP subproblem, when triggered, operates exclusively on a subset of the nonzero components.

It is also important to note that in line \ref{line.badnt} of Algorithm \ref{algo.IReNA}, we compare the decrease $ \tilde \Delta_{\textrm{QP}}$ resulting from the QP subproblem with the decrease $\hat\Delta_{\textrm{IST}}$ obtained from the IST subproblem.  If $\tilde\Delta_{\textrm{QP}} < \nu \hat \Delta_{\textrm{IST}}$ for a prescribed $\nu > 0$, the algorithm prefers the iterate generated by the IST subproblem. This mechanism ensures that the QP solution is only adopted when it provides an effective reduction in the objective function and the sign is not locally stabilized.
}

 \begin{algorithm}
	\caption{Proposed IReNA for solving \eqref{p.prob}} \label{algo.IReNA} 
	\begin{algorithmic}[1]
	\REQUIRE $(x^0,\epsilon^0) \in \mathbb{R}^{n} \times \mathbb{R}^{n}_{++}$, $\{\eta_{\Phi}, \eta_{\Psi}\} \in (0,1]$, $\{\gamma_\epsilon, \gamma_\beta, \gamma_\alpha\} \in (0,1)$, $\{\tau ,\eta,\xi_{\beta} ,\xi_{\alpha},\bar{\beta},  \zeta_k, \nu \} \in (0, \infty)$.
	\FOR{$k=0, 1, 2, \ldots$}
	        \STATE (\textbf{Optimality measure}) Compute $\Psi^k, \Phi^k$.
	\IF{$\max\{ \|\Psi^k\|, \|\Phi^k\|\} \leq\tau$ and $\epsilon^k_i \le \tau , i\in\Ical^k$} \label{line.terminate}
		\RETURN{the (approximate) solution $x^k$. 
		} 
		\label{line.return}
		\ENDIF
        \STATE (\textbf{Subspace determination}) Choose $\Wcal_k$ based on $\Psi^k,\Phi^k$ by \eqref{eq.chooseW}. \label{line.Wk}
        \STATE (\textbf{IST step \ref{alg.IST}}) Compute $(\hat x^{k}, \hat \beta_{k},\hat \Delta^k_{\textrm{IST}}) \leftarrow \textrm{IST}(x^k,\omega^k,\Wcal_k;\xi_{\beta} ,\gamma_\beta,\bar{\beta})$.\label{line.IST}	
        \IF{ $\textrm{sgn}(\hat x^{k} ) = \textrm{sgn}(x^k)$}\label{line.iflocal}
        \STATE (\textbf{QP subproblem}) Find an approximate solution $\tilde{d}^k$ to \eqref{sub.qp} satisfying \eqref{eq.imposedCond}.\label{line.QP}
        \STATE (\textbf{Line search \ref{alg.pls}}) Compute $( \tilde x^k,\tilde \alpha_k,\tilde \Delta^k_{\textrm{QP}}) \leftarrow \textrm{PLS}(x^k, \epsilon^k,\tilde{d}^{k},\Wcal_k;\xi_{\alpha} ,\gamma_\alpha)$. \label{line.nt} 
        \IF{$\textrm{sgn}(\tilde x^k ) \neq \textrm{sgn}(x^k)$ and $\tilde\Delta^k_{\textrm{QP}} < \nu\hat\Delta^k_{\textrm{IST}}$} \label{line.badnt}
           \STATE {Set $x^{k+1} \leftarrow \hat x^k$.} \label{line.ist1}
           \ELSE 
            \STATE Set $x^{k+1} \leftarrow \tilde x^k$. \label{line.qp2}
		\ENDIF
        \ELSE
        \STATE Set $x^{k+1} \leftarrow \hat x^{k}$. \label{line.ist2}
        \ENDIF
	    \STATE Update  $\epsilon_i^{k+1}  \in(0, \gamma_\epsilon\epsilon^k_i)$, $i\in\Ical_0^{k+1}$ and $\epsilon_i^{k+1} = \epsilon_i^{k}$, $i\in\Ical_0^{k+1}$.  \label{line.eps}
        \STATE Update {$\omega_i^{k+1} = \lambda  r'(|x_i^{k+1}|+\epsilon^{k+1}_i)$}, $i \in [n]$.\label{line.reweight}
	\ENDFOR
	\end{algorithmic}
\end{algorithm}


Proposition \ref{prop:optimility} implies that the algorithm converges to an optimal solution of $F(x)$ if $\max\{ \|\Psi^k\|, \|\Phi^k\|\} \to 0$ and $\epsilon^k_i \to 0$  for all $i\in\Ical^k$. 
This criterion underlies our termination condition (line \ref{line.terminate}--line  \ref{line.return}). 
To ensure convergence, the updating strategy for $\epsilon$ should be carefully designed. Specifically, once the true support $\Ical(\bx^*)$ is identified, we drive $\epsilon^k_i$ to zero rapidly for all $i\in\Ical(\bx^*)$, while keeping $\epsilon^k_i$ fixed for $i\in\Ical_0(\bx^*)$.  By doing so, we can effectively eliminate the zeros and their associated $\epsilon_i$ from $F(x; \epsilon)$, making the problem resemble a smooth optimization
problem---a crucial aspect of the convergence analysis. To achieve this, our updating strategy (line \ref{line.reweight}) reduces $\epsilon_i$ only for indices in $i\in\Ical^k$. This dynamic strategy, originally proposed in \cite{wang2021relating}, was proven to stop updating $\epsilon_i$ for $i\in\Ical_0(\bx^*)$ while consecutively decreasing  $\epsilon_i, i\in\Ical(\bx^*)$ for sufficiently large iterations.

\section{Convergence Analysis}\label{sec.convergence}

We need the following assumptions in the analysis.  

\begin{assumption}\label{assumption.Hk}
The tolerance is set as $\tau  = 0$ and  the level set $\textrm{Lev}_{F} :=\{x:F(x)\leq F(x^0; \epsilon^0) \}$ is contained in a bounded ball $\{x \mid \|x\|\le R\}$ so that $F$ is bounded below and $f$ is Lipschitz differentiable   on $\textrm{Lev}_{F}$  with constant $L_f > 0$. 
  \end{assumption}

%

We can use the results in \S\ref{sec.algorithm} to establish the well-posedness of IReNA. 
\begin{theorem}[Well-posedness]\label{well.pose}  
    Suppose $\tau  = 0$. Then Algorithm \ref{algo.IReNA}   produces an infinite sequence $\{(x^k,\epsilon^k)\}$ satisfying that $\{F(x^k;\epsilon^k)\}$ and $\{\epsilon_i^k\}, i\in[n]$  are both monotonically decreasing  and convergent with $\{\bx^k\} \subset \textrm{Lev}_{F}$. 
\end{theorem}
\begin{proof} Each $\{\epsilon_i^k\}$ is obviously monotonically decreasing.   Combining \eqref{pls.des} and \eqref{add.back} gives $F(x^{k+1};\epsilon^{k+1}) \le F(x^k;\epsilon^k)$. 
Moreover, it follows from 
$F(\bx^k) < F(\bx^k;\epsilon^k)\le F(\bx^0;\epsilon^0)$ that $\{x^{k}\} \subset \textrm{Lev}_{F}$.
\end{proof}

{ 
Theorem \ref{well.pose} implies that the local Lipschitz constant of $f$ in Lemma \ref{lem.iststepsize} satisfies $L_1(x^k) \le L_f$, so that 
\begin{equation}\label{mu is bound} \underline \beta \le \hat \beta_k\le \bar \beta\end{equation}
with $\underline\beta:=\min\{\bar{\beta},\frac{\gamma_\beta}{L_f + \xi_{\beta} }\}$.  
Next, we show the same results for $ \lambda_{\min}(\bH^k)$, $\lambda_{\max}(\bH^k)$  in Lemma \ref{lem.dbound} and $L_2(x^k;\epsilon^k)$ in  Lemma \ref{lem.SD}. 

\begin{lemma}\label{lem.H}
    Let $\{x^k\}$ be the sequence generated by Algorithm \ref{algo.IReNA}.   
    Consider all $\bH^k$ generated in subproblem \eqref{sub.qp}. 
    There exists $0 < \lambda_{\min} <\lambda_{\max}<+\infty$ such that  
    $ \lambda_{\min} \bI \preceq    \bH^k  \preceq  \lambda_{\max} \bI $. Moreover,  there exists $L_F>0$ such that 
    $L_2(x^k;\epsilon^k) < L_F$. 
\end{lemma}
\begin{proof}
Since the eigenvalues of $\bH^k$ and $L_2(\bx^k;\epsilon^k)$ all depend on $r''(|x_j^k|), j\in\Ical^k$ and $x^k$ is bounded in $\textrm{Lev}_{F}$, 
we only need to examine the case when $\liminf\limits_{k \to +\infty} |x_j^k| = 0$ for some $j \in [n]$. 
    If $r'(0^+) <  \infty$,  then $|r''(0^+)| < \infty$ and 
    the statement naturally holds. 

    Now consider the case $r'(0^+) =  \infty$, which implying $r''(0^+) = -\infty$ by concavity. We claim that there exists $\delta > 0$ such that $|x^k_i| > \delta, i \in \Ical^k$ holds true for all $k$.  
Suppose by contradiction that this is not true. Then there exist   $\Scal\in\mathbb{N}$ and an index $j$ such that 
\begin{equation}\label{eq.contrad} 
   j \in\mathcal{I}(x^{k}), \  k\in \Scal, \  |\Scal | = +\infty
    \end{equation} 
      and $\{|x_j^k|\}_{\Scal}$ is strictly decreasing to 0. 

This means  
$  {\epsilon}^k_j \to 0$ and $ \omega_j^k \overset{\Scal}{\to} \lambda r'(0^{+}) = +\infty$. 
Therefore,  there exist a sufficiently large $\check{k}  \in \Scal$ such that 
 \begin{equation}\label{extreme.bound}
   \hat\beta_{\check{k}} \omega_j^{\check{k}} \ge \underline\beta \omega_j^{\check{k}}  > \sup_{\bx\in \textrm{Lev}_F} \{ |x_j| + \bar\beta |\nabla_jf(\bx)|\} > | x_j^{\check{k}} - \hat\beta_{\check{k}} \nabla_j f(x^{\check{k}})|.
   \end{equation}
By Lemma \ref{lem.ist.ist}, we know that 
 $d_j(\hat\beta_{\check{k}}) = -x_j^{\check{k}}$. In other words, the IST subproblem returns  
 $\hat x_j^{\check{k}+1} = 0$.  This means $\textrm{sgn}(\hat{x}^{\check{k}+1}) \ne \textrm{sgn}(x^{\check{k}})$, so that the QP subproblem is not triggered and 
 $x^{\check{k}+1} =  \hat{x}^{\check{k}+1}$, resulting in $j \in \Ical_0^{\check k+1}$. 

Since $0= |x_j^{\check{k} + 1}| < x_j^{\check{k}}$ and $\epsilon_j^{\check{k}+1} = \epsilon_j^{\check{k} } $, we have    $\omega_j^{\check{k} + 1} > \omega_j^{\check{k}  } $, which implies that    \eqref{extreme.bound} also holds at $\check{k} + 1$, or equivalently, 
$\omega_j^{\check{k}+1} > | \nabla_j f(\bx^{\check{k}+1})|$. By the definition of $\Psi$,  
$[\Psi (\bx^{\check{k} +1};\epsilon^{\check{k} + 1})]_j= 0$. This means $j$ is not selected by line \ref{line.Wk}, so that  $x_j^{\check{k} + 2} = 0$. We can repeat the same argument for $k\ge \check{k} +2$ and obtain that   $x_j^k \equiv 0$, which contradicts \eqref{eq.contrad}.  The proof is complete.
\end{proof}

Lemma \ref{lem.H} implies that the lower bound described in \eqref{reduce.support.2.orig} can be replaced by 
\begin{equation}\label{alpha is bound} \underline \alpha \le \tilde\alpha_k\le 1\end{equation}
with $\underline \alpha := \frac{\gamma_\alpha\lambda^2_{\min}}{2\lambda_{\max}L_F+2\xi_{\alpha}\lambda_{\max}}$. The lower bound is established from \eqref{reduce.support.2.orig}:
\begin{equation*}
    \tilde\alpha_k  \ge   \frac{-2 \gamma_\alpha\langle \nabla_{\Wcal} F(x^k;\epsilon^k), d^k\rangle }{ (L_2(x^k;\epsilon^k)+\xi_{\alpha})\|  d^k \|^2} 
    \geq 2\frac{\gamma_\alpha\frac{\|g^k\|^2}{\lambda_{\max}}}{(L_2(x^k;\epsilon^k)+\xi_{\alpha}) \frac{4 \|g^k\|^2}{\lambda^2_{\min}}} \geq \frac{\gamma_\alpha\lambda^2_{\min}}{2\lambda_{\max} L_F+2\xi_{\alpha}\lambda_{\max}},
\end{equation*}
{where the second inequality is from \eqref{eq.xij2.2} and \eqref{eq.xij2.3} and the third inequality uses Lemma \ref{lem.H}.}
}


\subsection{Global Convergence} 
The global convergence of IReNA is established in this subsection. For ease of presentation, we first define the following sets of iterations for our analysis.
$$
\begin{aligned}
\Scal_{\textrm{IST}} &:= \{k \in \mathbb{N}: \textrm{$x^{k+1} = \hat x^{k+1}$ by line  \ref{line.ist1} or \ref{line.ist2} at the $k$th iteration}\},\\ 
\Scal_{\textrm{QP}} &:= \{k \in \mathbb{N}: \textrm{$x^{k+1} = \tilde x^{k+1}$ by line  \ref{line.qp2} at the $k$th iteration}\}.\\ 
\end{aligned}
$$
The set $\Scal_{\textrm{QP}} $ is further spitted into two subsets:
$$
\Scal_{\textrm{QP}}^{\textrm{N}}:=\{k\in \Scal_{\textrm{QP}} :\textrm{sgn}(x^{k+1})\neq\textrm{sgn}(x^{k})\}, \quad
\Scal_{\textrm{QP}}^{\textrm{Y}}:= \Scal_{\textrm{QP}}\setminus \Scal_{\textrm{QP}}^{\textrm{N}}.
$$

We are now ready to prove the global convergence of the proposed algorithm. 
{ 
\begin{theorem}[Global convergence]\label{theo.global.conv}   
Let $\{(x^k,\epsilon^k)\}_{k \in \mathbb{N}}$ be the sequence generated by Algorithm \ref{algo.IReNA}. Then 
\[
\lim_{k \to +\infty} \|x^{k+1}-x^k\| \;=\; 0 
\quad\text{and}\quad
\lim_{k \to +\infty} \|\epsilon^{k+1}-\epsilon^k\| \;=\; 0.
\]
Moreover,  it holds that
\[
\lim_{k \to +\infty} \|\Phi^k\| \;=\; 0 
\quad\text{and}\quad
\lim_{k \to +\infty} \|\Psi^k\| \;=\; 0.
\]
\end{theorem} 
Let $\chi^{\infty}$ be the set of all cluster points of $\{x^k\}$. Then any $x^* \in \chi^{\infty}$ satisfies the first-order stationary condition \eqref{eq.optimalcondition}.
\begin{proof}

From \eqref{add.back}, \eqref{pls.des}, we have
\begin{equation*}
    \sum_{k=1}^{t} \Bigl[F(x^k;\epsilon^k) \;-\; F(x^{k+1};\epsilon^{k+1}) \Bigr]  
     \geq  
    \sum_{k \in \Scal_{\textrm{IST}}}^{t} \frac{\xi_{\beta}}{2} \|x^{k+1}-x^k\|^2 
    \;+\;
    \sum_{k \in \Scal_{\textrm{QP}}}^{t} \frac{\xi_{\alpha}}{2} \|x^{k+1}-x^k\|^2.
\end{equation*}
Letting $t \to +\infty$, it follows immediately that 
\[
\sum_{k=1}^{\infty} \|x^{k+1}-x^k\|^2 < +\infty.
\]
Hence $\|x^{k+1}-x^k\| \to 0$ as $k \to +\infty$. From line~\ref{line.eps} of Algorithm~\ref{algo.IReNA}, we also obtain $\|\epsilon^{k+1}-\epsilon^k\| \to 0$. 

We now consider the three sets of iterations and show that the reduction in objective is lower bounded by the optimal residuals:
\begin{itemize}
    \item[(i)] For $k \in \Scal_{\textrm{IST}}$, if $\Wcal_k$ is chosen from \eqref{eq.chooseW1}, then we have 
    \begin{equation*}
        \begin{aligned}
        F(x^{k};\epsilon^{k}) - F(x^{k+1};\epsilon^{k+1})
        & \geq
         \frac{\xi_{\beta} }{2} \|x^{k+1}-x^{k}\|^2 
        \geq
         \frac{\xi_{\beta} }{2}\|\underline\beta \,\Psi^k_{\Wcal_k}\|^2 \\
        &\geq
         \frac{1}{2}\xi_{\beta}  \,\eta_{\Psi}^2\,\underline\beta^2\,\|\Psi^k \|^2
        \geq c_1 \max\{\eta^2 \|\Phi^k\|^2, \|\Psi^k\|^2\},  
        \end{aligned}
    \end{equation*}
    where the first inequality follows from \eqref{add.back} and \eqref{lem.dec0}, the second from \eqref{mu is bound} and \eqref{le.dk1}, and the last two hold for \eqref{eq.chooseW1} with $c_1 := \frac{1}{2}\xi_{\beta}  \,\eta_{\Psi}^2\,\underline\beta^2$. 
    Similarly, if $\Wcal_k$ is chosen from \eqref{eq.chooseW2}, then 
     by \eqref{add.back}, \eqref{mu is bound}, \eqref{le.dk2} and \eqref{eq.chooseW2}, we have
     \begin{equation}\label{phi_dec}
        \begin{aligned}
           F(x^{k};\epsilon^{k}) - F(x^{k+1};\epsilon^{k+1}) 
           &\geq \frac{\xi_{\beta} }{2} \|x^{k+1}-x^{k}\|^2 
         \geq  \frac{\xi_{\beta} }{2}\|\min\{\underline\beta,1\} \,\Phi^k_{\Wcal_k}\|^2 \\
         &\geq \frac{1}{2}\xi_{\beta}  \,\eta_{\Phi}^2\,\min\{\underline\beta^2,1\}\,\|\Phi^k \|^2 \geq c_2 \max\{\eta^2 \|\Phi^k\|^2, \|\Psi^k\|^2\} ,
        \end{aligned}
    \end{equation}
    where $c_2 := \frac{1}{2}\xi_{\beta}  \,\eta_{\Phi}^2\,\min\{\underline\beta^2,1\}\, \eta^{-2}$.
    \item[(ii)] For $k \in \Scal_{\textrm{QP}}^{\textrm{N}}$, then $\Wcal_k$ is chosen from \eqref{eq.chooseW2} and line \ref{line.badnt} does not hold. 
    We have from \eqref{pls.des} that 
    \begin{equation*}
         \begin{aligned}
            F(x^{k};\epsilon^{k}) - F(x^{k+1};\epsilon^{k+1}) \geq \tilde\Delta^k_{\textrm{QP}}
          \geq  \nu \hat\Delta^k_{\textrm{IST}} \geq  \nu c_2 \max\{\eta^2 \|\Phi^k\|^2, \|\Psi^k\|^2\},
         \end{aligned}
    \end{equation*}
    where the last inequality follows from
    \begin{equation*}
         \hat\Delta^k_{\textrm{IST}} = G_k(x^{k};\epsilon^{k}) - G_k(\hat x^{k+1}) \geq c_2 \max\{\eta^2 \|\Phi^k\|^2, \|\Psi^k\|^2\},  
    \end{equation*}
    where the inequality follows similarly to \eqref{phi_dec}.
    \item[(iii)] For $k \in \Scal_{\textrm{QP}}^{\textrm{Y}}$, then $\Wcal_k$ is chosen from \eqref{eq.chooseW2}. We have from \eqref{pls.des} and \eqref{lem.dec0} that
    \begin{equation*}\label{eq.sd2}
         \begin{aligned}
            F(x^{k};\epsilon^{k}) - F(x^{k+1};\epsilon^{k+1})
            &  \geq  \frac{\xi_{\alpha}}{2} \|x^{k+1}-x^{k}\|^2 
             \geq \frac{\xi_{\alpha}}{2}\| \underline\alpha\,d^k_{\Wcal_k}\|^2 
             \geq \frac{\xi_{\alpha}\,\underline{\alpha}^2}{2\,\lambda^2_{\max}} \|g^k\|^2\\
            & \geq \frac{\xi_{\alpha}\,\underline{\alpha}^2}{2\,\lambda^2_{\max}} \|\Phi^k_{\Wcal_k}\|^2
             \geq \frac{\xi_{\alpha}\,\underline{\alpha}^2\,\eta^2_{\Phi}}{2\,\lambda^2_{\max}} \|\Phi^k\|^2 \\
             &\geq c_3 \max\{\eta^2 \|\Phi^k\|^2, \|\Psi^k\|^2\}.
         \end{aligned}
    \end{equation*}
    where the second inequality follows from \eqref{alpha is bound}, the third from \eqref{eq.xij2.3}, the fourth from Proposition \ref{prop:optimility}(i), and the last two hold for \eqref{eq.chooseW2} with $c_3:= \frac{\xi_{\alpha}\,\underline{\alpha}^2\,\eta^2_{\Phi}\eta^{-2}}{2\,\lambda^2_{\max}}$.  
\end{itemize}
Summing over these inequalities and letting $k \to +\infty$ gives
\begin{equation*}
        \sum_{k=1}^{\infty} \Bigl[F(x^k;\epsilon^k) \;-\; F(x^{k+1};\epsilon^{k+1}) \Bigr] \geq
         \sum_{k=1}^{\infty} \min\{c_1,c_2,\nu c_2,c_3\}\max\{\eta^2 \|\Phi^k\|^2, \|\Psi^k\|^2\} .
\end{equation*}
Using the fact that $F$ is bounded below on $\mathrm{Lev}_F$ (by Assumption~\ref{assumption.Hk}), we have $\|\Phi^k\| \to 0$ and $\|\Psi^k\| \to 0$. 

Consider any \( x^* \in \chi^{\infty} \) and a subsequence \( \{x^k\}_{\Scal} \) such that \( x^k \to x^* \). Then, there exists a sufficiently large \( \check{k} \in \Scal \) such that for all \( k \in \Scal \) with \( k > \check{k} \), we have \( x^k_j \in \Ical^k \) for all \( j \in \Ical(x^*) \). This, together with the \( \epsilon \) update strategy in line \ref{line.eps}, ensures that  
\[
\lim_{k\to +\infty, k\in\Scal} \epsilon^k_j = 0, \quad \forall j \in \Ical(x^*).
\]  
By Proposition \ref{prop:optimility}(iii), it follows that \( x^* \) satisfies \eqref{eq.optimalcondition}. This completes the proof.
\end{proof}

}





{ 
Next we show that the iterates generated by IReNA possess the locally stable sign property after a finite number of iterations, which means that $\sign(x^k)$ remains unchanged after some finite $k\in \mathbb{N}$. 
Establishing sign stability requires identifying the nonzero components within a finite number of iterations. This problem is closely related to the topic of active manifold in nonsmooth  optimization, which typically requires the  nondegeneracy condition $0 \in \textrm{rint } \partial f(x)$ to hold at a critical point \cite{liang2017activity, hare2009proximal}. 
We also impose this assumption, stated equivalently as follows:
\begin{assumption}\label{ass.inter}
At any stationary point $x^{*}$ of \eqref{p.prob},  
\[  \nabla_i f(x^{*}) \in (-\lambda r'(0^+),\lambda r'(0^+)), \quad i\in\Ical_0(\bx^*).\]
\end{assumption}
This assumption naturally holds when $r'(0^+)=+\infty$. With this, we are now ready to establish the local support stability of the iterates.

\begin{proposition}[Locally stable sign]\label{Prop.support.stable}
 Let $\{x^k\}$ be the sequence generated by Algorithm \ref{algo.IReNA}. The following statements hold. 
    \begin{itemize}
        \item[(i)] There exists $\delta > 0$ such that 
        $ |x^k_i| > \delta, i \in \Ical^k$ holds true for all     $k$. Consequently, { $\omega^k_i < \hat{\omega} :=\lambda r'(\delta), i\in\Ical^k$} holds true for all $k$. 
        \item[(ii)] There exist index sets $ \Ical_0^* $ and $\Ical^*$ such that $\Ical_0^k\equiv  \Ical_0^*$ and $\Ical^k \equiv \Ical^*$ for sufficiently large $k$. 
        \item[(iii)] $k\in \Scal^{\textrm{Y}}_{\textrm{QP}}$ and $\sign(x^{k+1}) = \sign(x^k)$ for sufficiently large $k$.
    \end{itemize}
\end{proposition} 

\begin{proof} 
(i) From the proof of Lemma \ref{lem.H}, we have already established this statement when $r''(0^+) = -\infty$ and $r'(0^+) = +\infty$. Using the same argument, we can prove the case when $r'(0^+) < +\infty$. 
{ Suppose by contradiction that there exist $\Scal\in\mathbb{N}$ and an index $j$ such that 
\begin{equation}\label{eq.contrad2}
   j \in\mathcal{I}(x^{k}), k\in \Scal, \  |\Scal | = +\infty, \{|x_j^k|\}_{\Scal} \to 0.
    \end{equation} 
This means $  {\epsilon}^k_j \to 0$, $ \omega_j^k \overset{\Scal}{\to} \lambda r'(0^{+})$. 
Let \( x^* \) be a cluster point of \( \{x^k\}_{\check\Scal} \) with \( \check\Scal \subseteq \Scal \). Then, from Theorem \ref{theo.global.conv}, we know that \( x^* \) is a first-order stationary point. By Assumption \ref{ass.inter} and \eqref{eq.contrad2}, it follows that  
$\lim_{k \to \infty, k \in \check\Scal} \left| \nabla_j f(x^{ k}) - x_j^{k}/{\hat\beta_{k}} \right| = |\nabla_j f(x^*)| < |\lambda r'(0^+)|$  
with \( \hat\beta_{k} \) being bounded.
Therefore, we know there exists a sufficiently large $\check{k} \in \check\Scal$ such that
\begin{equation}\label{extreme.bound2}
    \hat\beta_{\check{k}}\omega_j^{\check{k}} > |x_j^{\check{k}} - \hat\beta_{\check{k}}\nabla_j f(x^{\check{k}}) | .
\end{equation}
The steps from \eqref{extreme.bound2} onward are analogous to those in the proof of Lemma \ref{lem.H} (see steps following \eqref{extreme.bound}), resulting in a contradiction. The proof is complete
}
 
(ii) Suppose by contradiction that this is not true.  Then there exists $j$ and $\mathbb{N} = \Scal \cup \Scal_0$ such that 
 $ |\Scal | = +\infty,  |\Scal_0 | = +\infty, x_j^k \in \Ical^k, k\in\Scal$ and $ x_j^k \in \Ical_0^k, k\in\Scal_0$. 
 It then follows from the update strategy (\ref{line.reweight})  of $\epsilon$ 
 that $\epsilon_j^k$ is reduced for all $k\in \Scal$ and therefore $\epsilon_j^k \to 0$. 
 Now for sufficiently large $k  \in\Scal_0$ satisfying   $\omega_j^k > L_f$ and $\Psi^k_j = 0$. 
 Therefore, $j$ will never be chosen by   \eqref{eq.chooseW2} and will stay in $\Ical_0^k$ for $\{k+1, k+2, ...\}$, which means $|\Scal| < +\infty$---a contradiction.  

(iii)
We only need to verify that \( k \in \Scal^{\textrm{Y}}_{\textrm{QP}} \) for all sufficiently large \( k \), as \( \sign(x^{k+1}) = \sign(x^k) \) naturally follows if the statement holds. Since statement (ii) holds, for sufficiently large \( k \), we have \( k \notin \Scal_{\textrm{QP}}^{\textrm{N}} \), and \( x^{k+1} \) cannot be obtained from line \ref{line.ist2}. It suffices to show that \( x^{k+1} \) is not obtained from line \ref{line.ist1} infinitely often.  
Suppose, for contradiction, that there exists an infinite subsequence \( \{x^k\}_{\Scal} \) with \( |\Scal| = +\infty \) such that \( x^{k+1} = \hat x^{k+1} \) is obtained from line \ref{line.ist1}. Then, for sufficiently large \( k \in \Scal \), we have  
\begin{equation}
    G_k(x^k) - G_k(x^{k+1}) = \hat\Delta^k_{\textrm{IST}} >\frac{1}{\nu}\tilde\Delta^k_{\textrm{QP}} \geq \frac{\xi_{\alpha}}{2\nu} \|\tilde x^{k}-x^k\|^2 \geq \frac{\xi_{\alpha} \delta^2}{2\nu},  
\end{equation}  
where the last inequality follows from \( \sign(\tilde x^k) \neq \sign(x^k) \) and statement (i). This implies that the objective decrement \( \hat \Delta^k_{\textrm{IST}} \) remains bounded below infinitely often, contradicting the boundedness of \( F \) in Assumption \ref{assumption.Hk}. The proof is complete.
\end{proof} 

Proposition \ref{Prop.support.stable}(ii) implies that there exists $\check{k}$ such that for all $k\ge \check{k}$,   $(x^k,\epsilon^k)$ always stays in the reduced subspace 
$ \Mcal(x^*,\epsilon^*):=\{(x,\epsilon) \mid x_{\Ical_0^*} = 0, \epsilon_{\Ical_0^*} \equiv \epsilon_{\Ical_0^*}^{\check{k}} \}$
and $x^k$ is contained in the reduced subspace
$\overline  \Mcal(x^*):=\{x \mid x_{\Ical_0^*} = 0 \}.$  

The local equivalence between $\Phi$ and the subspace gradient of $F(x;\epsilon)$ also follows from Proposition \ref{prop:optimility}, as shown in the following corollary.
}
\begin{corollary}\label{coro.grad}  
Let $\{(x^k,\epsilon^k)\}_{k \in \mathbb{N}}$ be the sequence generated by Algorithm \ref{algo.IReNA}. Then $[\Phi(x^k;\epsilon^k)]_i  = \nabla_iF(x^k;\epsilon^k), i\in\Ical^*$ for sufficiently large $k$.  Consequently, $\lim_{k\to+\infty}$ $\nabla_{\Ical^*}F(x^k;\epsilon^k) = 0.$
\end{corollary}  
 \begin{proof} 
 It suffices to prove the second and third cases in \eqref{eq:nonzerosmeasure}.  For $i\in\Ical^k_+$ and $\nabla_if(x^k)+\omega^k_i > 0$, Proposition \ref{Prop.support.stable} implies 
 that $\max\{x^k_i, \nabla_i f(x^k) - \omega^k_i\} > \delta > 0$. Note that $[\Phi^k]_i \to 0$ by Theorem \ref{theo.global.conv}. This implies that $[\Phi^k]_i = \nabla_i f(x^k)+\omega^k_i = \nabla_i F(x^k;\epsilon^k)$ for all sufficiently large $k$. For $i\in\Ical_-^k$ and $\nabla_if(x^k)-\omega^k_i < 0$, similar arguments establish $[\Phi(x^k;\epsilon^k)]_i = \nabla_i f(x^k)+\omega^k_i = \nabla_i F(x^k;\epsilon^k)$ for all sufficiently large $k$.    
\end{proof} 
 
\subsection{Convergence Rate Under the KL Property}
In this subsection, we establish the local convergence properties by using the well-known KL property.  For clarity, we adopt the definitions of the KL exponent from \cite[Definitions 2.3]{li2018calculus}.

\begin{definition}[KL exponent]   For a proper closed function $f$ satisfying the KL property at $\bar{x} \in \text{dom } \partial f$, if the corresponding function $\phi$ can be chosen as $\phi(s)=cs^{1-\theta}$ for some $c>0$ and $\theta \in [0,1)$,  i.e., there exist $c, \rho > 0$ and $\upsilon \in (0, \infty]$ so that
$$\text{dist}(0,\partial f(x)) \geq c(f(x)-f(\bar{x}))^{\theta}$$
whenever $x \in \Bcal(\bar x,\rho) $ and $f(\bar{x}) < f(x) < f(\bar{x}) + \upsilon$, then we say that $f$ has the KL property at $\bar{x}$ with an exponent of $\theta$. If $f$ is a KL function and has the same exponent $\theta$ at any $\bar{x} \in \text{dom } \partial f$, then we say that $f$ is a KL function with an exponent of $\theta$.
\end{definition}

Based on the above results, we restrict our discussion to the reduced subspace $\mathbb{R}^{\vert \Ical^*\vert}$. In addition, we denote $\epsilon = \varepsilon \circ \varepsilon$ by $\varepsilon \geq 0$ and treat $\varepsilon$ as a variable. 
As noted in \cite[Page 63, Section 2.1]{luo1996mathematical}, determining 
or estimating the KL exponent of a given function is often extremely challenging. A particularly relevant and useful result is the theorem provided in  \cite{zeng2016sparse}, with a thorough proof in \cite[Theorem 7]{wang2022extrapolated}.  It claims that a smooth function has a KL exponent $\theta =1/2$ at a non-degenerate critical point, where the Hessian is non-singular. Based on this, we can derive the following result.

\begin{proposition}\label{calculate.KL}  Consider the following four cases.  
\begin{enumerate}
\item[(i)]  The KL exponent of $F(x,\varepsilon)$ restricted on $\Mcal(x^*,\varepsilon^*)$ at $(x_{\Ical^*}^*, 0)$ is $\theta $. 
\item[(ii)]  The KL exponent of $F(x,\varepsilon)$ at $(x^*, 0)$ is $\theta $. 
\item[(iii)]  The KL exponent of $F(x)$ restricted on $\overline \Mcal(x^*)$ at $x_{\Ical^*}^*$ is $\theta $. 
\item[(iv)]   The KL exponent of $F(x)$ at $x^*$ is $\theta $. 
 \end{enumerate} 
 Then we have (i) $\Longleftrightarrow$ (ii), (iii) $\Longleftrightarrow$ (iv), and (i)$\implies$(iii).  Moreover, we 
 have $\theta  \in (0,1)$ and  $\theta = 1/2$   if $\nabla^2_{\Ical^*\Ical^*} F(x^*)$ is nonsingular in (iii).   
\end{proposition}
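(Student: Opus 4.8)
The plan is to reduce every one of the four statements to the behaviour of a $C^2$ function near a critical point, and then to apply two standard pieces of KL-exponent calculus — preservation under partial minimization and under adding a separable term — together with the cited fact (\cite{wangthesis,wang2022extrapolated}) that a smooth function has exponent $1/2$ at a nondegenerate critical point. The starting point is the smoothing effect of the two manifolds: by \Cref{support.stable} and \Cref{global.conv}, for $i\in\Ical^*$ one has $x_i^*\neq 0$ and $\varepsilon_i^*=0$, so on $\Mcal(\bm{x}^*,\bm{\varepsilon}^*)$ the map $(x_{\Ical^*},\varepsilon_{\Ical^*})\mapsto F(\bm{x},\bm{\varepsilon})$ coincides locally, up to an additive constant, with $f(x_{\Ical^*},0)+\lambda\sum_{i\in\Ical^*}(|x_i|+\varepsilon_i^2)^p$ and is $C^2$ near $(x_{\Ical^*}^*,0)$ since each $|x_i|$ is smooth away from $0$; likewise $F|_{\overline{\Mcal}}$ is $C^2$ near $x_{\Ical^*}^*$. \Cref{coro.grad} together with \eqref{eq.optimalcondition} identifies these as genuine critical points, which reduces (a) and (c) to smooth KL statements.

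For the implication (a)$\implies$(c) I would eliminate the $\varepsilon_{\Ical^*}$ variables. Since each $(|x_i|+\varepsilon_i^2)^p$ is increasing in $\varepsilon_i^2$, the map $\varepsilon_{\Ical^*}\mapsto F|_{\Mcal}(x_{\Ical^*},\varepsilon_{\Ical^*})$ attains its minimum at the interior point $\varepsilon_{\Ical^*}=0$, and this minimum is nondegenerate (the second derivative in $\varepsilon_i$ at $0$ equals $2\lambda p\,|x_i|^{\,p-1}>0$), so locally $F|_{\overline{\Mcal}}(x_{\Ical^*})=\min_{\varepsilon_{\Ical^*}}F|_{\Mcal}(x_{\Ical^*},\varepsilon_{\Ical^*})$. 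The rule that partial minimization preserves the KL exponent then carries $\theta$ from $(x_{\Ical^*}^*,0)$ to $x_{\Ical^*}^*$.

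The implications (a)$\implies$(b) and (c)$\implies$(d) both amount to adding back the zero-support coordinates $x_{\Ical_0^*}$ (for (a)$\implies$(b) also $\varepsilon_{\Ical_0^*}$, which enter only through a $C^2$ term bounded away from its singular set and are thus harmless). The key observation is that each added coordinate contributes a separable term of KL exponent $0$ at the origin: in case (d), $x_i\mapsto\lambda|x_i|^p$ has an infinitely sharp cusp, so $\mathrm{dist}(0,\bar\partial F)$ blows up while the value increment stays bounded; in case (b), the corner of $x_i\mapsto\lambda(|x_i|+\epsilon_i^*)^p$ has finite one-sided slope $\omega_i^*=\lambda p(\epsilon_i^*)^{p-1}$, and the strict complementarity $|\nabla_if(\bm{x}^*)|<\omega_i^*$ furnished by the support-detection analysis in \Cref{support.stable} makes $0$ a strict corner minimizer, again of exponent $0$. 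The separable-sum rule (exponent of a sum is the maximum of the exponents) then yields $\max\{\theta,0\}=\theta$. The main obstacle is that the coupling through $f$ is not separable, so I would not invoke the sum rule verbatim; instead I expect a two-region argument near $\bm{x}^*$: on $\{x_{\Ical_0^*}=0\}$ the limiting subdifferential reduces to that of the restricted function and (a)/(c) applies, whereas where some $|x_i|$, $i\in\Ical_0^*$, is nonzero the associated subgradient component is bounded below by a positive constant (or unbounded), so the KL inequality holds there with exponent $0$. The delicate point is absorbing the smooth cross terms of $f$ into these estimates via Taylor expansion.

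Finally, to pin down $\theta$, I use case (c): $F|_{\overline{\Mcal}}$ is $C^2$ with Hessian $\nabla^2_{\Ical^*\Ical^*}F(\bm{x}^*)$ at the critical point $x_{\Ical^*}^*$, so the cited smooth-function result gives $\theta=1/2$ whenever this Hessian is nonsingular; the same theory keeps $\theta$ strictly below $1$, while $\theta>0$ is immediate, giving $\theta\in(0,1)$. These values then transfer to (d) through (c)$\implies$(d).
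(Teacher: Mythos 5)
Your proposal is correct in substance and reaches the same conclusions, but it routes the two calculus steps differently from the paper. For (a)$\implies$(c) the paper uses a one-line slice argument rather than partial minimization: at points $(\bm{x},\bm{0})$ of $\Mcal(\bm{x}^*,\bm{\varepsilon}^*)$ one has $\nabla_{\varepsilon_{\Ical^*}}F(\bm{x},\bm{0})=2\bm{\omega}\circ\bm{0}=\bm{0}$ and $F(\bm{x},\bm{0})=F(\bm{x})$, so the KL inequality of (a), read only on the slice $\varepsilon_{\Ical^*}=\bm{0}$, \emph{is} the KL inequality of (c). Your inf-projection argument is valid but collapses to exactly this observation, because the argmin over $\varepsilon_{\Ical^*}$ is identically $\bm{0}$; the nondegeneracy of the minimized block and the preservation rule are then not needed. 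For (a)$\implies$(b) and (c)$\implies$(d) the paper gives no argument beyond citing \cite[Theorem 3.7]{Pong2018}, whereas you outline a self-contained two-region argument; that outline is sound, and the ``delicate point'' you flag (absorbing the cross terms of $f$) needs no Taylor expansion --- a uniform bound $L$ on $\|\nabla f\|$ over the bounded level set suffices. Concretely, in case (d), at any $\bm{x}$ with $x_i\neq 0$ for some $i\in\Ical_0^*$ and $\|\bm{x}-\bm{x}^*\|\le\rho$, the $i$th component of the limiting subdifferential has modulus at least $\lambda p\rho^{p-1}-L$, which exceeds any prescribed constant once $\rho$ is small, while $F(\bm{x})-F(\bm{x}^*)<\eta$ stays bounded, so the KL inequality holds on that region with any exponent; in case (b) the same role is played by the uniform margin $\omega_i^*-L>0$ established in the proof of \Cref{support.stable}; and on $\{x_{\Ical_0^*}=0\}$ the distance to the limiting subdifferential equals the restricted gradient norm, since the $i$th components of the subdifferential for $i\in\Ical_0^*$ contain $0$ (by strict complementarity in case (b), and because the limiting subdifferential of $\lambda|\cdot|^p$ at $0$ is all of $\mathbb{R}$ in case (d)), so (a)/(c) applies there. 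Thus your route is more elementary and self-contained, at the cost of length, while the paper's citation buys brevity but hides precisely the verification you sketch. The remaining claims --- $\theta=1/2$ when $\nabla^2_{\Ical^*\Ical^*}F(\bm{x}^*)$ is nonsingular, via \cite[Theorem 7]{wang2022extrapolated}, and $\theta\in(0,1)$ --- are handled identically in both proofs.
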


\begin{proof} 
Since $F(x,\varepsilon)$ is restricted on $\Mcal(x^*,\varepsilon^*)$, there exists a sufficiently small $\rho>0$ such that $F(x,\varepsilon)$ is differentiable with $\nabla F$ strictly continuous on $\mathcal{B}((x_{\Ical^*}^*,0),\rho)$. By \cite[Example 3.1]{lewis2002active} and \cite[Proposition 13.33]{rockafellar2009variational}, we know that $F$ is prox-regular \cite[Definition 13.27]{rockafellar2009variational} and $\Ccal^2$-partly smooth \cite[Definition 2.7]{lewis2002active} at  $(x_{\Ical^*}^{*},0)$. In addition, we have from Assumption \ref{ass.inter} that $0 \in \textrm{rint }\partial F(x_{\Ical^*}^{*},0)$. Therefore, it follows from \cite[Theorem 3.7]{li2018calculus} that (i) $\implies$ (ii). 

Conversely, since $F(x,\varepsilon)$ has the KL exponent $\theta $ at $(x^{*},0)$, there exist $\upsilon>0$, $\rho > 0$ and $c >0$ such that for all $(x^{*},0) \in \textrm{Lev}_{\upsilon}(x^{*},0)$ with $\textrm{Lev}_{\upsilon}(x^{*},0) = \{ (x,\varepsilon) \in \mathcal{B}((x^{*},0),\rho) \mid F(x^{*},0)<F(x,\varepsilon)<F(x^{*},0)+\upsilon\}$,
    $\textrm{dist}(0,\bar{\partial} F(x,\varepsilon))\geq c (F(x,\varepsilon) - F(x^{*},0))^{\theta }.$
Since $x_i^{*} \neq 0$ for any $i \in \Ical^*$, there exists $\hat{\rho} >0$ such that for all $(z,0) \in \mathcal{B}(x_{\Ical^*}^{*},0)$, $z_i \neq 0, i \in \Ical^*$. Let $\tilde{\rho} = \min(\rho, \hat{\rho})$. Consider any $(x,\varepsilon) \in \Mcal(x^*,\varepsilon^*)$. It implies that
$(x_{\Ical^{*}},\varepsilon_{\Ical^{*}}) \in \textrm{Lev}_{\upsilon}(x^{*}_{\Ical^{*}},0)$ with $\textrm{Lev}_{\upsilon}(x^{*}_{\Ical^{*}},0) = \{(x_{\Ical^*},\varepsilon_{\Ical^*})\in\mathcal{B}((x_{\Ical^*}^{*},0),\tilde{\rho})\mid F(x_{\Ical^*}^{*},0) \leq F(x_{\Ical^*},\varepsilon_{\Ical^*}) \leq F(x_{\Ical^*}^{*},0) + \upsilon\}$. Hence, we know that
\begin{equation*}
   \begin{aligned}
        \Vert \nabla_{\Ical^{*}}F(x^{*},0)\Vert=\textrm{dist}(0,\bar{\partial} F(x,\varepsilon)) &\geq c (F(x,\varepsilon) - F(x^{*},0))^{\theta }\\
        &= c (F(x_{\Ical^{*}},\varepsilon_{\Ical^{*}}) - F(x_{\Ical^*}^{*},0))^{\theta }.
   \end{aligned}
\end{equation*}
This completes the proof of conclusion (i) $\Longleftrightarrow$ (ii). The proof of (iii) $\Longleftrightarrow$ (iv) can be finished using similar arguments by setting $\varepsilon \equiv 0$.

We now prove (i) $\implies$ (iii). Note that $\varepsilon_{\Ical^*}^* = 0$ and $\frac{\partial}{\partial \varepsilon_{\Ical^*}}  F(x^*, \varepsilon^*) = 2\omega_{\Ical^*}^*\circ \varepsilon_{\Ical^*}^* = 0$.  By KL property and (i),  there exists $c > 0$, such that 
\begin{equation*}
    \begin{aligned}
        \| \nabla_{{\Ical^*}  } F(x)\| =&\  \| 
        \left[
        \begin{matrix}
            \frac{\partial}{\partial x_{\Ical^*}} F(x, \varepsilon^*) \\
            0
        \end{matrix}
        \right] \| =  \|\nabla_{{\Ical^*}  } F(x,\varepsilon^*)\| \\
        \geq &\ c(F(x, \varepsilon^{*}) - F(x^*,\varepsilon^*))^{\theta } \ge c(F(x) - F(x^*))^{\theta }
    \end{aligned}
\end{equation*}
indicating (iii) holds.

Moreover, if $\nabla^2_{\Ical^*\Ical^*} F(x^*)$ is nonsingular,   \cite[Theorem 6]{wang2022extrapolated} indicates that the KL exponent of (c) is $ 1/2$. In addition, 
$\| \nabla_{\Ical^*} F(x)\| \ge  c(F(x) - F(x^*))^0$ cannot be true by Corollary \ref{coro.grad}, so that $\theta \ne 0$. 

\end{proof}

The convergence rate analysis of IR$\ell_1$-type methods for $\ell_p$-regularized problems \eqref{p.prob} under the KL property has been studied in \cite{wang2023convergence, wang2022extrapolated}. For further insights into convergence rate analysis for a broader range of descent methods, we refer readers to \cite{li2018calculus, attouch2013convergence, attouch2009convergence}.
We proceed to demonstrate that IReNA satisfies both the \textit{sufficient decrease condition} and the \textit{relative error condition} as outlined in \cite{attouch2013convergence, attouch2009convergence, wen2018proximal}.
\begin{lemma}
Let $\{(x^k,\epsilon^k)\}$ be the sequence generated by Algorithm \ref{algo.IReNA}. Then the following statements hold.
 \begin{itemize}
     \item[(i)] (sufficient decrease condition) There exists $C_1 > 0$ such that
     \begin{equation*}\label{eq.F_x}
        F(x^{k+1}, \varepsilon^{k+1}) +  C_1 \|x^{k+1} - x^{k}\|^2  \le F(x^{k }, \varepsilon^{k }).
    \end{equation*}
 
    \item[(ii)] (relative error condition) There exists $C_2>0$ such that
    \begin{equation*}\label{eq.rec}
    \|\nabla F(x^{k+1},\varepsilon^{k+1})\| \leq C_2 (\|x^{k+1} - x^k\| + \|\varepsilon^{k}\|_1 - \|\varepsilon^{k+1}\|_1).
\end{equation*}
 \end{itemize}
\end{lemma}
\begin{proof} 
 For brevity and without loss of generality, we remove the subscript $\Ical^*$ in this proof. Write $\nabla_{x} F(x,\varepsilon)$ for $\frac{\partial}{\partial x_{\Ical^*}} F(x,\varepsilon)$ and $\nabla_{\varepsilon} F(x,\varepsilon)$ for $\frac{\partial}{\partial \varepsilon_{\Ical^*}} F(x,\varepsilon)$.  

(i). It follows from \eqref{pls.des} that $C_1 = \frac{\xi_{\alpha}}{2}$.

(ii). We first establish an upper bound of $\|\nabla_{x} F(x^{k+1},\varepsilon^{k+1}) \|$.
\begin{equation*}
    \begin{aligned}
        &\|\nabla_{x} F(x^{k+1},\varepsilon^{k+1}) \| \leq \|\nabla_{x} F(x^{k+1},\varepsilon^{k+1}) -  \nabla_{x} F(x^{k},\varepsilon^{k})\| + \|\nabla_{x} F(x^{k},\varepsilon^{k})\| \\
        \leq &\ \|\nabla f(x^{k+1})-\nabla f(x^k)\| + \|\omega(x^{k+1},(\varepsilon^{k+1})^2) - \omega(x^{k},(\varepsilon^{k})^2)\| + \|\nabla_{x} F(x^{k},\varepsilon^{k})\|. \\
    \end{aligned}
\end{equation*}
By the Lipschitz property of $f$, the first term is bounded by
\begin{equation*} 
    \|\nabla f(x^{k+1})-\nabla f(x^k)\| \leq L_f \|x^{k+1}-x^k\|.
\end{equation*}
Now we give an upper bound for the third term. Combining \eqref{eq.xij2.3} and \eqref{alpha is bound} we have for $k \ge \check{k}$, 
   \begin{equation*}
   \begin{aligned}
        \|x^{k+1} - x^k\| =\| \alpha_k \tilde{d}^k \| &\ge \|\underline{\alpha} \tilde{d}^k\|  \ge     \frac{\underline{\alpha}}{\lambda_{\max}}     \|\nabla_{x} F(x^k,\varepsilon^k) \|.
   \end{aligned}
   \end{equation*} 
Finally, we give an upper bound for the second term. From \cite[Lemma 1]{wang2023convergence}, we have the magnitude change between $\omega^{k+1} := \omega(x^{k+1},(\varepsilon^{k+1})^2)$ and $\omega^{k} := \omega(x^{k},(\varepsilon^{k})^2)$ is bounded by:
\begin{equation*}
    \|\omega^{k+1} - \omega^k\| \leq \bar{C} [\sqrt{|\Ical^*|}\|x^{k+1} - x^{k}\| + 2\|\varepsilon^0\|_{\infty}(\|\varepsilon^{k}\|_1 - \|\varepsilon^{k+1}\|_1)],
\end{equation*}
where {    $\bar{C} $ is the local Lipschitz constant of $r'$.}
Putting together the bounds for all three terms, we have
\begin{equation}\label{eq.xgrad}
\begin{aligned}
        \|\nabla_{x} F(x^{k+1},\varepsilon^{k+1}) \| \leq& (L_f + \frac{\lambda_{\max}}{\underline{\alpha}} +  \bar C \sqrt{|\Ical^*|} )\|x^{k+1} - x^{k}\| \\
    &+  2\|\varepsilon^0\|_{\infty}\bar C (\|\varepsilon^{k}\|_1 - \|\varepsilon^{k+1}\|_1).
\end{aligned}
\end{equation}
On the other hand, 
\begin{equation}\label{eq.epsgrad}
    \begin{aligned}
        &\quad\ \|\nabla_{\varepsilon} F(x^{k+1},\varepsilon^{k+1}) \|\\
        &\leq \|\nabla_{\varepsilon} F(x^{k+1},\varepsilon^{k+1}) \|_1 = \sum_{i \in \Ical^*} 2[\omega(x^{k+1},(\varepsilon^{k+1})^2)]_i \cdot \varepsilon_i^{k+1} \\
        &\leq  \sum_{i \in \Ical^*} 2 \hat{\omega} \frac{\sqrt{\gamma_\epsilon}}{1-\sqrt{\gamma_\epsilon}} (\varepsilon_i^{k}-\varepsilon_i^{k+1}) \leq 2 \hat{\omega} \frac{\sqrt{\gamma_\epsilon}}{1-\sqrt{\gamma_\epsilon}} (\|\varepsilon^{k}\|_1 - \|\varepsilon^{k+1}\|_1),
    \end{aligned}
\end{equation}
where the second inequality holds by Proposition \ref{Prop.support.stable}(i) and $\varepsilon^{k+1} \leq \sqrt{\gamma_\epsilon} \varepsilon^k$. Overall, we obtain from \eqref{eq.xgrad} and \eqref{eq.epsgrad} that 
\begin{equation*}
    \|\nabla F(x^{k+1},\varepsilon^{k+1})\| \leq C_2 (\|x^{k+1} - x^k\| + \|\varepsilon^{k}\|_1 - \|\varepsilon^{k+1}\|_1),
\end{equation*}
where
$$
C_2 := \max\{L_f + \frac{\lambda_{\max}}{\underline{\alpha}} +  \bar C \sqrt{|\Ical^*|},  2\|\varepsilon^0\|_{\infty}\bar C + 2 \hat{\omega} \frac{\sqrt{\gamma_\epsilon}}{1-\sqrt{\gamma_\epsilon}} \}.
$$ The proof is complete.
\end{proof}
With the above results established, the subsequent analysis follows naturally and can be derived using standard arguments.
\begin{theorem}\label{lem.acc.gc2.conv}
Let $\{(x^k,\epsilon^k)\}$ be the sequence generated by Algorithm \ref{algo.IReNA}. Suppose that  $F(x,\varepsilon)$ restricted to $ \Mcal(x^*, \varepsilon^*)$ is a KL function at all stationary points $(x^*,0)$. Then it holds that
\begin{equation}\label{kl_conv}
    \sum_{k=0}^{\infty} \Vert x^{k+1} -x^{k} \Vert < \infty.
\end{equation}
Therefore $\{x^{k}\}$ converges to a stationary point $x^{*}$.
In addition, if $F$ is a local KL function of some exponent $\theta  \in (0,1)$ at all stationary points. Then for sufficiently large $k$ it holds that
	\begin{enumerate}
		\item[(i)] If $\theta  \in (0,\frac{1}{2}]$, then there exists $\vartheta \in (0,1), C_3>0$  such that $\|x^k-x^*\|< C_3\vartheta^k$.
		\item[(ii)] If $\theta  \in (\frac{1}{2},1)$, then there exist $C_4>0$ such that $\|x^k-x^*\|< C_4 k^{-\frac{1-\theta }{2\theta -1}}$.
	\end{enumerate}
\end{theorem}

\begin{proof}
Statements \eqref{kl_conv}, (i), and (ii) follow from the standard analysis of perturbed functions with the KL property. Consequently, \eqref{kl_conv} can be derived from the convergence analysis in \cite[Theorem 2.9]{attouch2013convergence} or \cite[Theorem 4]{attouch2009convergence} and (i),(ii) can be derived using the same arguments as in the proofs of \cite[Theorem 4]{wang2023convergence} and \cite[Theorem 4]{wang2022extrapolated}.
\end{proof}

\subsection{Local Convergence Analysis with Exact QP Solution}
In this subsection, we establish the local convergence properties of Algorithm \ref{algo.IReNA} in the neighborhood of stationary points. By Proposition \ref{Prop.support.stable}, after a finite number of iterations, the iterates generated  by IReNA are obtained through solving only a reduced-space QP subproblem combined with a backtracking line  search.  For the purpose of this analysis, let $\chi^{\infty}$ be the set consisting of all stationary points of $\{x^{k}\}$ generated by IReNA, and we make the following assumptions.

\begin{assumption}\label{assumption.2} 
Let $\{x^k\}$ be the sequence generated by Algorithm \ref{algo.IReNA} with $\{x^k\} \to x^{*} \in \chi^{\infty}$. Suppose there exists $M >0$ such that reduced-space Hessian of $F(x)$ at $x^{*}$ satisfying $\| \nabla^2_{\Ical^*\Ical^*}F (x^{*})^{-1}\| \leq M$. For all sufficiently large $k$, we assume
\begin{itemize} 
         \item[(i) ] The exact Hessian  $H^k =\nabla^2_{\Ical^k\Ical^k}F (x^{k};\epsilon^k)$  is used in $m_k(d)$ and $\Wcal_k \equiv \Ical^k = \Ical(x^{*})$. 
        \item[(ii)] The reduced-space QP subproblem \eqref{sub.qp} is solved exactly i.e., $\tilde{d}^k =  (H^k)^{-1} g^k$.
         \item[(iii)] For all sufficiently large $k$, the unit stepsize $\alpha_k \equiv 1$ is accepted.
    \end{itemize}
    \end{assumption} 
    As shown in Proposition \ref{calculate.KL} ,  the non-singularity of $\nabla^2_{\Ical^*\Ical^*} F(x) $ implies that the KL exponent of $F$ is $1/2$ at $x^*$. Therefore, a linear convergence rate is achieved by Theorem \ref{lem.acc.gc2.conv}.  In the following, we show that under Assumption \ref{assumption.2}, a quadratic convergence rate can be attained when $\epsilon_{\Ical^*}^k \to 0$ at a quadratic rate.

\begin{theorem} \label{theorem.quadratic} 
    Suppose Assumption \ref{assumption.2} holds. Let $\{(x^{k},\epsilon^{k})\}$ be the sequence generated by Algorithm \ref{algo.IReNA} and $\{x^{k}\} \to x^{*} \in \chi^{\infty}$. Then there exit $\check{k} \in \mathbb{N}$ and $\rho>0$ such that for all $x^{k} \in \mathcal{B}(x^{*},\rho)$ satisfies
    \begin{equation*}
        \|x^{k+1} - x^{*}\| \leq  \frac{3M L_H}{2} \|x^k-x^{*}\|^2 + \mathcal{O}(\|\epsilon^{k}\|),\ \  \forall  k \geq \check{k},
    \end{equation*}
    where $\nabla^2_{\Ical^*\Ical^*} F(x)$ is locally Lipschitz continuous with constant $L_H>0$ on $\mathcal{B}(x^{*},\rho)$.
    \end{theorem}
\begin{proof}     	 
We can remove the subscript $\Wcal_k$ for simplicity by Assumption \ref{assumption.2}(i). Given $\nabla^2 F(x^*)$ is nonsingular, we can select a sufficiently small $\rho$ so that $\nabla^2 F(x^k; \epsilon^k)$ is also nonsingular for any $\|x^k-x^*\|<\rho$ and $\|\epsilon^k\|<\rho$, since $\nabla^2 F(\cdot; \epsilon)$  is continuous with respect to $\epsilon$. We have from Assumption \ref{assumption.2}(ii) that
		\begin{equation*}\label{eq.qdeq}
			\begin{aligned}
				x^{k+1} - x^{*} &= x^k - x^{*} - \nabla^2 F(x^k; \epsilon^k)^{-1}\nabla F(x^k; \epsilon^k)\\
				&=  x^k - x^{*} - \nabla^2 F(x^k; \epsilon^k)^{-1}(\nabla F(x^k; \epsilon^k) - \nabla F(x^{*})) \\
				&= \nabla^2 F(x^k; \epsilon^k)^{-1}(\nabla F(x^{*}) - \nabla F(x^k; \epsilon^k) - \nabla^2 F(x^k; \epsilon^k)(x^{*} - x^k)).
			\end{aligned}
		\end{equation*}
It follows that
		\begin{equation}\label{local.big.ineq}
			\begin{aligned}
				&\| x^{k+1} - x^{*}\|  \\
                \leq & \  \| \nabla^2 F(x^k; \epsilon^k)^{-1}\|  \| \nabla F(x^{*}) - \nabla F(x^k; \epsilon^k) - \nabla^2 F(x^k; \epsilon^k)(x^{*} - x^k)\|  \\
                = &\ \| \nabla^2 F(x^k; \epsilon^k)^{-1}\|    \| \nabla F(x^{*}) - \nabla F(x^k) - \nabla^2 F(x^k)(x^{*} - x^k) \\
                &+\nabla F(x^k) - \nabla F(x^k; \epsilon^k) + [\nabla^2 F(x^k)- \nabla^2 F(x^k; \epsilon^k)](x^{*} - x^k) \| .
			\end{aligned}
		\end{equation}		
Consider any $\rho > 0$ and $x^k$ such that  $\| x^k - x^{*}\| \leq \rho < \frac{1}{2ML_H}$. We have that  
		\begin{equation*}
			\begin{aligned}
				\| \nabla^2 F(x^{*})^{-1}(\nabla^2 F(x^k) - \nabla^2 F(x^{*}))\|  &\leq \| \nabla^2 F(x^{*})^{-1}\|    \| \nabla^2 F(x^k) - \nabla^2 F(x^{*})\|  \\
				&\leq ML_H \| x^k - x^{*}\|  \leq \rho ML_H  \leq \tfrac{1}{2}, 
			\end{aligned}
		\end{equation*}
        and 
		\begin{equation*}
			\| \nabla^2 F(x^k)^{-1}\|  \leq \frac{\| \nabla^2 F(x^{*})^{-1}\| }{1-\| \nabla^2 F(x^{*})^{-1}(\nabla^2 F(x^k) - \nabla^2 F(x^{*}))\| } \leq 2M.      \footnote{For matrices $a$ and $B$, if $a$ is nonsingular and $\|a^{-1}(B-a)\|_2<1$, then $B$ is nonsingular and 
        $\|B^{-1}\|_2\leq \frac{\|a^{-1}\|_2}{1-\|a^{-1}(B-a)\|_2}.$}
		\end{equation*}
	We can then choose $\rho$ even smaller such that   
	\begin{equation}\label{local.bd.1}
	\| \nabla^2 F(x^k; \epsilon^k)^{-1}\| <  3M,
	\end{equation}
	 for any $\|x^k-x^*\|<\rho$ and $\|\epsilon^k\|<\rho$.

{   On the other hand, for $x^k \in \Bcal(x^*,\rho)$ and $|x^k| > \delta > 0$, there exist two constant $c_1,c_2$, such that $r'$ is locally $c_1$-Lipschitz continuous and $r''$ is locally $c_2$-Lipschitz continuous. It follows that
      \begin{equation*}
          \begin{aligned}
       |  \lambda r'(|x^k_i|) - \lambda r'(|x^k_i|+\epsilon^k_i) | 
        &\le  c_1 \epsilon^k_i, \\
        |  \lambda r''(|x^k_i|) - \lambda r''(|x^k_i|+\epsilon^k_i) |  &\le  c_2 \epsilon^k_i.
        \end{aligned}
      \end{equation*} 
      Therefore, it holds that   
        \begin{equation}\label{local.bd.2}
        \begin{aligned}
            \|\nabla F(x^k) - \nabla F(x^k; \epsilon^k)\| &\leq 
        c_1 \|\epsilon^k\|, \\
                \| [\nabla^2 F(x^k)- \nabla^2 F(x^k; \epsilon^k)](x^{*} - x^k)\| 
        &\leq c_2\| \epsilon^k\|\cdot \|x^k \!-\! x^{*}\|.
        \end{aligned}
        \end{equation}}
Since $\nabla F(x)$ is continuously differentiable on $\mathcal{B}(x^{*},\rho)$, we know   
        \begin{equation}\label{local.bd.4}
        \|\nabla F(x^{*}) - \nabla F(x^k) - \nabla^2 F(x^k)(x^{*} - x^k)\| \leq \frac{L_H}{2} \|x^k-x^{*}\|^2.
        \end{equation}
Combining \eqref{local.big.ineq}  with  \eqref{local.bd.1},  \eqref{local.bd.2} and \eqref{local.bd.4}, we have for all $\|x^k-x^*\|<\rho$ and $\|\epsilon^k\|<\rho$ that 
        \begin{equation*}
            \begin{aligned}
                 \| x^{k+1} - x^{*}\| \leq &\ 3M ( \frac{L_H}{2} \|x^k-x^{*}\|^2 + c_1 \|\epsilon^k\| +  c_2\| \epsilon^k\|   \|x^k - x^{*}\| )\\
                \leq &\  \frac{3M L_H}{2} \|x^k-x^{*}\|^2 + \mathcal{O}(\|\epsilon^{k}\|).
            \end{aligned}
        \end{equation*}
        This completes the proof.
\end{proof}

\subsection{Local Second-order Complexity Grantees with Trust-region Subproblem}\label{sec.tr} 
The QP subproblem solved in line \ref{line.QP} of Algorithm \ref{algo.IReNA} seeks an inexact Newton direction restricted to a reduced space using subspace regularized Newton method. Proposition \ref{Prop.support.stable} indicate that the QP subproblem will be always triggered after some iterations. Therefore, we can consider replacing \eqref{sub.qp} with the following tailored regularized trust-region subproblem:
\begin{equation}\label{eq.trustregion}
    \begin{aligned}
        \tilde{d}_{\Wcal_k}^k = \underset{d\in\mathbb{R}^{|\Wcal_k|}}{\arg\min}\quad  \langle g^k, d\rangle+ \frac{1}{2}\langle d, H^k d\rangle + \frac{1}{2} \zeta_H \|d\|^2, \ \ \ \ \text{s.t.  }\|d\| \leq \rho_{k},
    \end{aligned}
\end{equation}
where $\zeta_H > 0$ and $\rho_{k} > 0$ represents  to the trust-region radius and $H^{k}$ may be indefinite. The trust-region Newton method presented in \cite[Algorithm 2.1]{curtis2021trust} can be directly applied to solve \eqref{eq.trustregion} with a slight modification. Specifically, we solve the regularized trust-region subproblem \eqref{eq.trustregion} and set $\tilde{d}_{[\Wcal_k]^c}^k = 0$ to obtain an intermediate direction $\tilde{d}^{k}$. We then compute the trial step $d^{k} = \mathbb{P}(x^{k}+\tilde{d}^{k};x^{k}) - x^{k}$. If $d^{k}$ leads to a non-increasing objective, it is accepted; otherwise, the radius of the trust region is reduced. In addition, it can be straightforwardly verified that Theorem \ref{theo.global.conv} also holds. Therefore, a  similar local second-order complexity can be derived, which is stated as follows.
\begin{theorem}\cite[Theorem 2.6]{curtis2021trust}
    Suppose the termination condition line  \ref{line.terminate} in Algorithm \ref{algo.IReNA} is set to $ \|\nabla F(x^k;\epsilon^k)\| \leq \tau \textrm{  and  } \lambda_{\min}(\nabla^2 F(x^k;\epsilon^k)) \geq -\tau^{1/2}.$ Let $\check{k} \in \mathbb{N}$ such that $\Ical(x^k) = \Ical^*$ for all $k > \check{k}$. Then the trust-region subproblem is implemented, and it holds that $|\Kcal| = \tilde{\Ocal}(\tau^{-3/2})$ with $\Kcal := \{k|\ k> \check{k}\}$.
\end{theorem}
{   The notation $s(\cdot) = \tilde{\mathcal{O}}(t(\cdot))$ means $\vert s(\cdot) \vert \leq C t(\cdot)\vert \log^{c}(\cdot)\vert$ for some $C \in (0,+\infty)$ and $c \in (0,+\infty)$.}

\section{Numerical Experiments} \label{sec.experiments}
In this section, we deliver a set of numerical experiments using both synthetic and real-world data across diverse model prediction problems to demonstrate the effectiveness of the proposed IReNA. We begin by considering the nonconvex $\ell_p$-regularized logistic regression problem \cite{liu2007sparse}: 
\begin{equation}\label{Prob.logistic regression}%
    \begin{aligned}
		& \underset{x \in \mathbb{R}^n}{\textrm{min}}
		& &  \sum_{i=1}^m \log(1+e^{-a_i x^T b_i})+ \lambda\|x\|_p^p,
    \end{aligned}
\end{equation}
where $\lambda >0$, and $a_i \in \{-1,1\}$ are the labels, and $b_i \in \mathbb{R}^n$ are the feature vectors for $i \in [m]$. We also incorporate various nonconvex regularizers into the logistic regression problem to further demonstrate the efficiency of the proposed algorithm.

In these numerical studies, we compare the proposed IReNA with several state-of-the-art algorithms for solving \eqref{Prob.logistic regression}, including HpgSRN \cite{wu2023regularized}\footnote{\href{https://github.com/YuqiaWU/HpgSRN}{The source code is available at https://github.com/YuqiaWU/HpgSRN}}, PCSNP\cite{zhou2023revisiting}\footnote{\href{https://github.com/YuqiaWU/HpgSRN}{The source code is available at https://github.com/ShenglongZhou/PSNP}} and EPIR$\ell_1$ \cite{wang2022extrapolated}. All implementations were carries out in MATLAB and executed on a PC with an Intel i$9$-$13900$K $3.00$ GHz CPU and $64$ GB of RAM.

\subsection{Experimental Setup}\label{param}
In our experiments, we set $\lambda = 1$ and obtain labels $a$ and features $b$ from different datasets. The synthetic dataset is generated following the methods in \cite{chen2017reduced,keskar2016second}. Specifically, for $ i \in [m]$, labels $a_i$ are drawn from a {B}ernoulli distribution and feature vectors $b_i$ are sampled from a standard Gaussian distribution. The real-world datasets--- \textit{w8a}, \textit{a9a}, \textit{real-sim}, \textit{gisette}, \textit{news20}, and \textit{rcv1.train}---are binary classification examples obtained from the LIBSVM repository\footnote{\href{https://www.csie.ntu.edu.tw/~cjlin/libsvmtools/datasets/}{Refer to https://www.csie.ntu.edu.tw/~cjlin/libsvmtools/datasets/}}. 

For IReNA, we set the following parameters:
\begin{equation*}
    \eta= 1 ,\eta_{\Phi}= 1,\eta_{\Psi} = 1, 
 \gamma_{\alpha} = 0.5,\gamma_{\beta} = 0.5,   \xi_{\beta}  = 10^{-8},\xi_{\alpha}  = 10^{-8}, \tau  = 10^{-6}, \nu = 1.
\end{equation*}
For the subspace quadratic subproblem \eqref{sub.qp}, we adopt the truncated conjugate gradient method described in \cite[Section 4.2]{chen2017reduced}. In this subproblem, we set 
\begin{equation*}
    \zeta_k = 10^{-8}+10^{-4} \|g^k\|^{0.5} + \min \{-\lambda r''(x^k_i)\ |\ i \in \Ical(x^k)\},
\end{equation*}
to ensure $H^k \succ 0$. For the tailored trust-region Newton subproblem discussed in Section \ref{sec.tr}, we employ the solver from \cite[Algorithm 2]{curtis2021trust}. In line \ref{line.IST} of Algorithm \ref{algo.IReNA}, the initial stepsize $\bar{\beta}$ is determined using  the classic Barzilai-Borwein (BB) rule \cite{barzilai1988two}.

We initialize the perturbation values as $\epsilon_i^0 = 1$ for all $i \in [n]$ and update the perturbation $\epsilon$ in line \ref{line.reweight} according to:
\begin{equation*}
    \epsilon_i^{k+1} = \begin{cases}
        0.9 \epsilon_i^{k} , & k \in \Scal_{\textrm{IST}} \textrm{ and } \Wcal^k \textrm{ is chosen from \eqref{eq.chooseW1}} , i \in \Ical^{k+1},\\
        0.9 (\epsilon_i^{k})^{1.1}, & k \in  \Scal_{\textrm{IST}} \textrm{ and } \Wcal^k \textrm{ is chosen from \eqref{eq.chooseW2}}  , i \in \Ical^{k+1},\\
        \min\{0.9\epsilon_i^{k} ,(\epsilon_i^{k})^2\}, &k \in \Scal_{\textrm{QP}}, i \in \Ical^{k+1},\\
           \epsilon_i^{k} , &\textrm{otherwise.}
    \end{cases}
\end{equation*}
Furthermore, before iteration $k$ enters $\Scal_{QP}$, we set $\epsilon^k_i = \max\{\epsilon^k_i, 10^{-8}\}$ for all $i$ when $k \in \Scal_{\textrm{IST}}$. This precaution ensures that the perturbation values do not become excessively small.

The parameters for HpgSRN, PCSNP, and EPIR$\ell_1$, are chosen according to the suggestions provided in their respective papers. In particular, because PCSNP addresses an $\ell_2$-norm regularized logistic regression problem, we set the $\ell_2$-norm regularization parameter to $0$, as specified in that study, to ensure a fair comparison. For all algorithms, the initial point was set as $x_i^{0} = 0$ for each $i\in [n]$. 

\subsection{Numerical Results}
\subsubsection{Local Quadratic Convergence}
We first demonstrate that IReNA exhibits local quadratic convergence using both the synthetic dataset and real-world datasets. Inspired by \cite{burke2014sequential}, our demonstration is based on the following two metrics: 
\begin{equation*}
    \Rcal_{\text{opt}}(x) = \| x \nabla f(x)+\lambda p|x|^p\| _{\infty}, \quad \textrm{ and } \quad\ \Rcal_{\text{dist}}(x) = \| x-x^*\|_{\infty},
\end{equation*}
where $x^{*}$ is the solution returned by IReNA and $p=0.5$. Figure \ref{fig.qudratic} displays $\log_{10}(\Rcal_{\text{opt}})$ and $\log_{10}(\Rcal_{\text{dist}})$ over the last ten iterations of Algorithm \ref{algo.IReNA} for six real-world datasets (see panels (a) and (b)) and synthetic datasets of various sizes (see panels (c) and (d)). For the synthetic datasets, each case was tested over $20$ random trials, and the average results are shown. As illustrated in Figure \ref{fig.qudratic}, the slope  of the convergence curves in the final iterations is generally less than $-2$,   which indicates local quadratic convergence.

\begin{figure}[htbp]
\centering
\begin{subfigure}{0.245\textwidth}
  \centering
  \includegraphics[width=\linewidth]{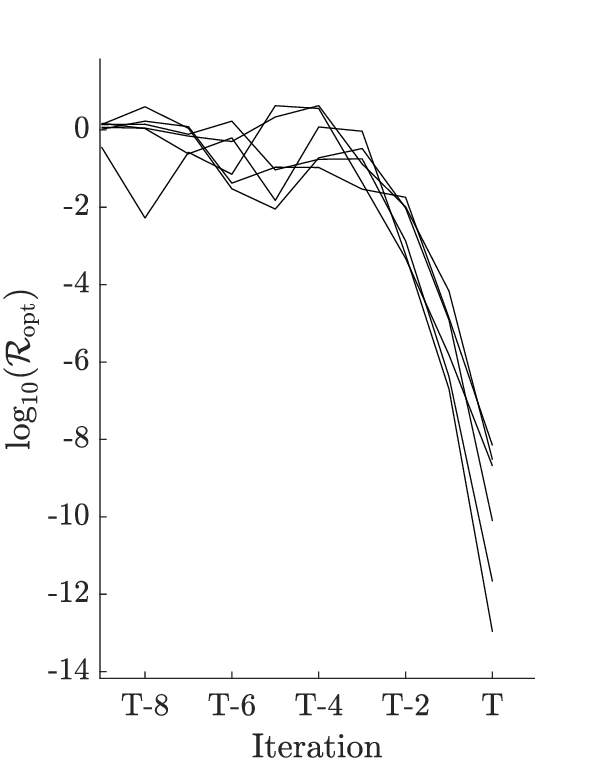}
  \caption{}
\end{subfigure}%
\begin{subfigure}{0.245\textwidth}
  \centering
  \includegraphics[width=\linewidth]{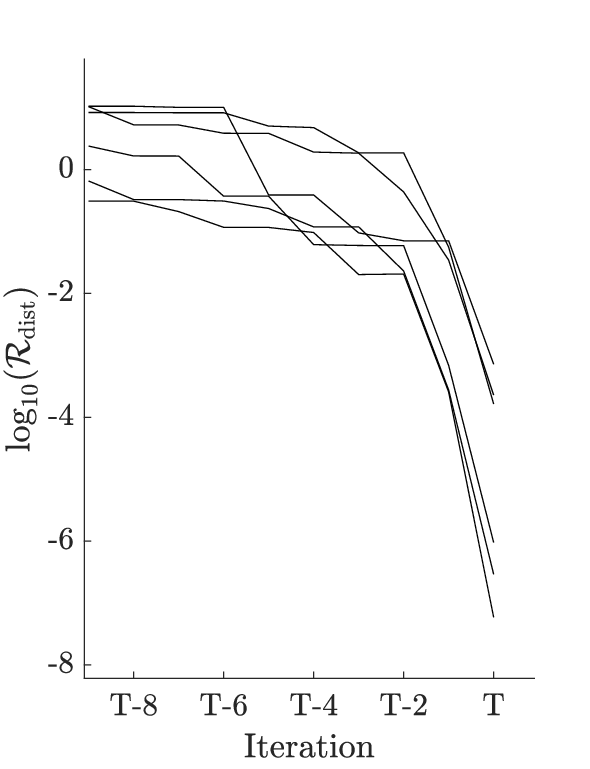}
  \caption{}
\end{subfigure}%
\begin{subfigure}{0.245\textwidth}
  \centering
  \includegraphics[width=\linewidth]{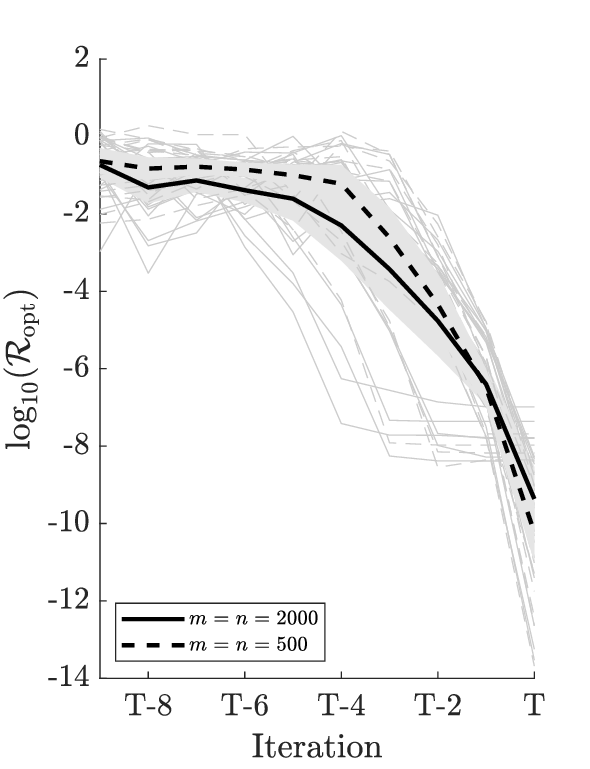}
  \caption{}
\end{subfigure}
\begin{subfigure}{0.245\textwidth}
  \centering
  \includegraphics[width=\linewidth]{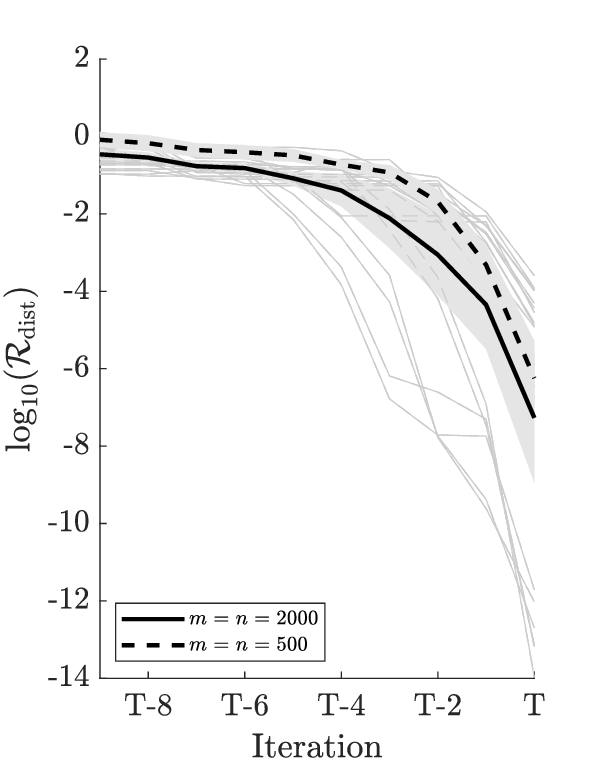}
  \caption{}
\end{subfigure}
\caption{Illustration of the local quadratic convergence of IReNA.}
\label{fig.qudratic}
\end{figure}

\subsubsection{Real-world Datasets}
In this subsection, we compare the performance of IReNA with HpgSRN, PCSNP and EPIR$\ell_1$ on real-world datasets. Note that IReNA-RQP refers to the variant of IReNA that solves a regularized quadratic subproblem \eqref{sub.qp}, whereas IReNA-TR refers to the variant that solves a trust-region subproblem \eqref{eq.trustregion}. For the second-order methods HpgSRN, PCSNP and IReNA, we adopt a termination criterion of
\begin{equation*}
    \| x^k-x^{k-1}\| / \| x^k\| <10^{-9},
\end{equation*}
in addition to their original termination conditions. For the first-order method EPIR$\ell_1$, we maintain its original termination condition of $\| x^k-x^{k-1}\| /\| x^k\| <10^{-4}$.

We evaluate the performance of these methods in terms of CPU time, objective value, and the sparsity level of the obtained solution (e.g., the percentage of zeros). All presented results are averaged over $10$ independent runs. In this test, we set $p = 0.5$. 
 
The comparison results are presented in Table \ref{tab.lr}. As shown in Table \ref{tab.lr}, IReNA-RQP and IReNA-TR generally achieve faster running times while attaining the lowest or near-lowest objective values, and they maintain a sparsity level comparable to that of the benchmark algorithms. During our numerical experiments, we observed that Newton steps in PCSNP are triggered frequently in the early stages, which likely contributes to its longer CPU time.

\begin{table}[htbp]
\small
  \centering
  \caption{Performance comparison on real-world datasets ($p = 0.5$). The feature matrix size is shown in parentheses. The two best objective values and CPU times are highlighted in bold.}
    \begin{tabular}{clcccc}
    \toprule
    \multicolumn{2}{c}{Dataset} & Algorithm & Time (s) & Obj. Values & \% of zeros \\
    \midrule
    \multicolumn{2}{c}{\multirow{2}[1]{*}{a9a}} & IReNA-RQP & \textbf{0.50 } & \textbf{10579.4 } & 45.53\% \\
    \multicolumn{2}{c}{} & IReNA-TR & \textbf{0.42 } & 10588.5  & 46.34\% \\
    \multicolumn{2}{c}{\multirow{3}[1]{*}{ (32561$\times$123)}} & HpgSRN & 2.90  & 10583.3  & 53.01\% \\
    \multicolumn{2}{c}{} & PCSNP & 4.45  & 10599.8  & 53.66\% \\
    \multicolumn{2}{c}{} & EPIRL1 & 5.99  & \textbf{10570.5 } & 39.02\% \\
    \midrule
    \multicolumn{2}{c}{\multirow{2}[1]{*}{w8a}} & IReNA-RQP & \textbf{0.61 } & 5873.8  & 37.00\% \\
    \multicolumn{2}{c}{} & IReNA-TR & \textbf{1.38 } & 5876.9  & 37.00\% \\
    \multicolumn{2}{c}{\multirow{3}[1]{*}{ (49749$\times$300)}} & HpgSRN & 2.01  & \textbf{5856.5 } & 37.00\% \\
    \multicolumn{2}{c}{} & PCSNP & 26.45  & 5870.9  & 37.67\% \\
    \multicolumn{2}{c}{} & EPIRL1 & 13.05  & \textbf{5865.1 } & 38.00\% \\
    \midrule
    \multicolumn{2}{c}{\multirow{2}[1]{*}{gisette}} & IReNA-RQP & \textbf{35.07 } & \textbf{176.4 } & 97.06\% \\
    \multicolumn{2}{c}{} & IReNA-TR & \textbf{32.71 } & \textbf{176.2 } & 97.06\% \\
    \multicolumn{2}{c}{\multirow{3}[1]{*}{ (6000$\times$5000)}} & HpgSRN & 36.58  & 177.0  & 96.92\% \\
    \multicolumn{2}{c}{} & PCSNP & 288.32  & 182.3  & 97.14\% \\
    \multicolumn{2}{c}{} & EPIRL1 & 326.48  & 178.3  & 97.02\% \\
    \midrule
    \multicolumn{2}{c}{\multirow{2}[1]{*}{real sim}} & IReNA-RQP & \textbf{3.81 } & \textbf{7121.6 } & 93.63\% \\
    \multicolumn{2}{c}{} & IReNA-TR & 10.01  & \textbf{7123.3 } & 93.63\% \\
    \multicolumn{2}{c}{\multirow{3}[1]{*}{ (72309$\times$20958)}} & HpgSRN & \textbf{6.80 } & 7262.3  & 94.47\% \\
    \multicolumn{2}{c}{} & PCSNP & 38.73  & 7445.8  & 95.02\% \\
    \multicolumn{2}{c}{} & EPIRL1 & 33.48  & 7152.7  & 93.78\% \\
    \midrule
    \multicolumn{2}{c}{\multirow{2}[1]{*}{rcv1.train}} & IReNA-RQP & \textbf{1.59 } & \textbf{2554.6 } & 99.10\% \\
    \multicolumn{2}{c}{} & IReNA-TR & 3.95  & \textbf{2558.9 } & 99.08\% \\
    \multicolumn{2}{c}{\multirow{3}[1]{*}{ (20242$\times$47236)}} & HpgSRN & \textbf{1.69 } & 2578.4  & 99.21\% \\
    \multicolumn{2}{c}{} & PCSNP & 6.86  & 2652.9  & 99.26\% \\
    \multicolumn{2}{c}{} & EPIRL1 & 14.10  & 2562.9  & 99.13\% \\
    \midrule
    \multicolumn{2}{c}{\multirow{2}[1]{*}{news20 }} & IReNA-RQP & \textbf{20.08 } & 4034.5  & 99.97\% \\
    \multicolumn{2}{c}{} & IReNA-TR & 93.54  & \textbf{3989.7 } & 99.97\% \\
    \multicolumn{2}{c}{\multirow{3}[1]{*}{(19996$\times$1355191)}} & HpgSRN & \textbf{23.74 } & 4171.3  & 99.97\% \\
    \multicolumn{2}{c}{} & PCSNP & 179.61  & 4682.0  & 99.98\% \\
    \multicolumn{2}{c}{} & EPIRL1 & 207.05  & \textbf{3983.6 } & 99.97\% \\
    \bottomrule
    \end{tabular}%
  \label{tab.lr}%
\end{table}%
\subsection{Generic Nonconvex Regularizers}\label{sec.diffp}
\subsubsection{More Stringent $p$ Values}
In this test, we only consider HpgSRN as the benchmark algorithm given the numerical performance comparisons presented in Table \ref{tab.lr}, and we further compare the numerical performance of IReNA-RQP with HpgSRN for solving \eqref{Prob.logistic regression} under more stringent $p$ values. Specifically, we set $p = 0.3$.

Table \ref{tab.smallp} presents the comparison in terms of CPU time and objective value. The table clearly shows that IReNA-RQP significantly outperforms HpgSRN, especially on large datasets such as \textit{real-sim} and \textit{news20}. For these datasets, the CPU time of HpgSRN increases dramatically compared to the results in Table \ref{tab.lr}, whereas our method maintains scalable CPU time. Furthermore, Figure \ref{fig.p3p5} demonstrates this trend on synthetic datasets, where our method requires nearly the same CPU time for $p = 0.3$ as for $p = 0.5$ even as the feature size increases, in contrast to the substantial increases observed with HpgSRN.


\begin{table}[htbp]
	\centering
	\caption{Comparison with HpgSRN on real-world datasets for $p = 0.3$.}
 \setlength{\tabcolsep}{4pt}
    \begin{tabular}{cccccccc}
    \toprule
          & Dataset & news20 & w8a   & a9a   & real sim & rcv1.train & gisette \\
    \midrule
    \multirow{2}[1]{*}{IReNA-RQP} & Time (s)  & 60.21  & 0.80  & 0.47  & 9.01  & 3.86  & 31.79  \\
          & Obj. Values & 3019.06  & 5825.87  & 10596.93  & 5669.32  & 1933.62  & 165.18  \\
    \multirow{2}[1]{*}{HpgSRN} & Time (s) & 1871.86  & 3.29  & 3.49  & 50.67  & 15.73  & 34.07  \\
          & Obj. Values & 3696.13  & 5802.88  & 10595.47  & 6099.26  & 2078.87  & 186.27  \\
    \bottomrule
    \end{tabular}%
	  \label{tab.smallp}%
  \end{table}%

\begin{figure}[htbp]
    \centering
    \includegraphics[width=\linewidth]{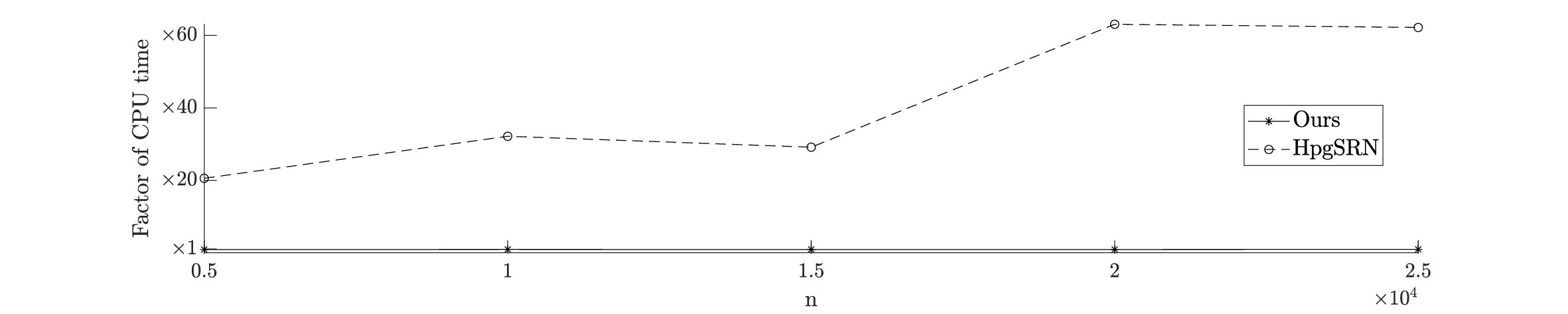}
    \caption{Comparison with HpgSRN for \( p = 0.3 \) on synthetic datasets. The $x$-axis represents the feature size, while the $y$-axis shows the ratio of CPU time for the same problem with \( p = 0.3 \) to that with \( p = 0.5 \). }
    \label{fig.p3p5}
\end{figure}
\subsubsection{Other Nonconvex Regularizers}
To evaluate the performance of IReNA on various nonconvex regularization problems, we incorporate  different nonconvex regularizers, as summarized in Table \ref{tab.l0approx}, into logistic regression using the \textit{a9a}, \textit{gisette}, and \textit{rcv1.train} datasets. In these evaluations, we set $q = 10^{-5}$ for the LOG regularizer and $q = 0.1$ for all others. Figure \ref{fig.extended} presents the logarithmic residuals over the last $10$ iterations, where the residual is defined as
$\Rcal_{\text{opt}} = \| x \circ (\nabla f(x) + \omega(x;0) \circ \text{sgn}(x)) \|_{\infty}$.

\begin{figure}[htbp]
\centering
\begin{subfigure}{0.32\textwidth}
  \centering
  \includegraphics[width=\linewidth]{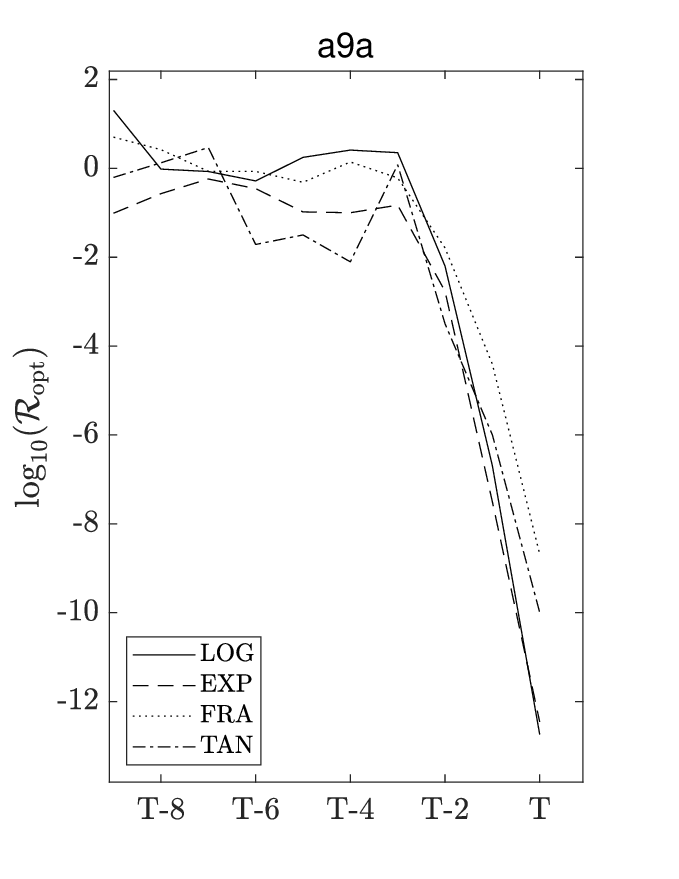}
\end{subfigure}%
\begin{subfigure}{0.32\textwidth}
  \centering
  \includegraphics[width=\linewidth]{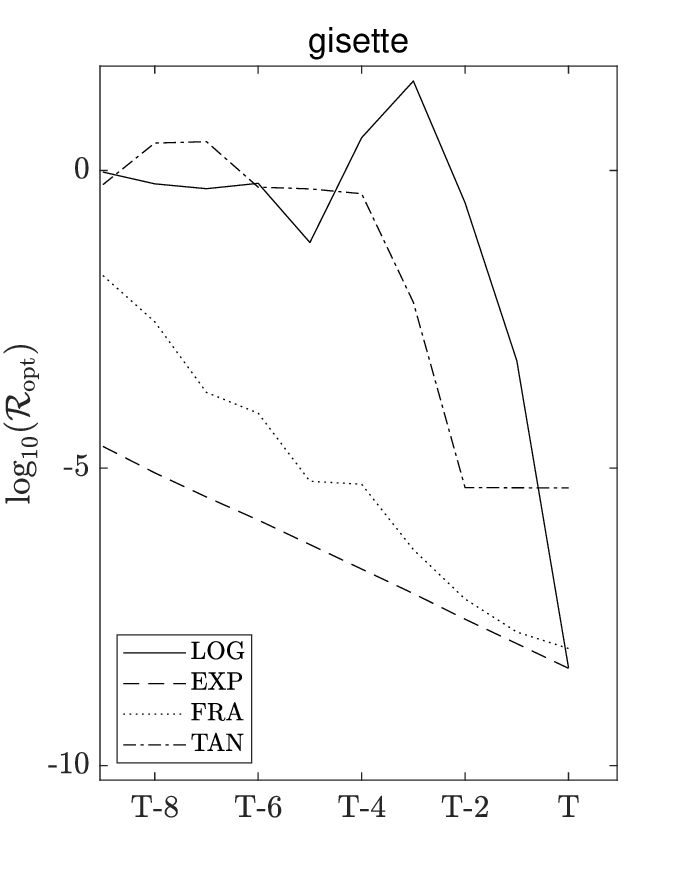}
\end{subfigure}%
\begin{subfigure}{0.32\textwidth}
  \centering
  \includegraphics[width=\linewidth]{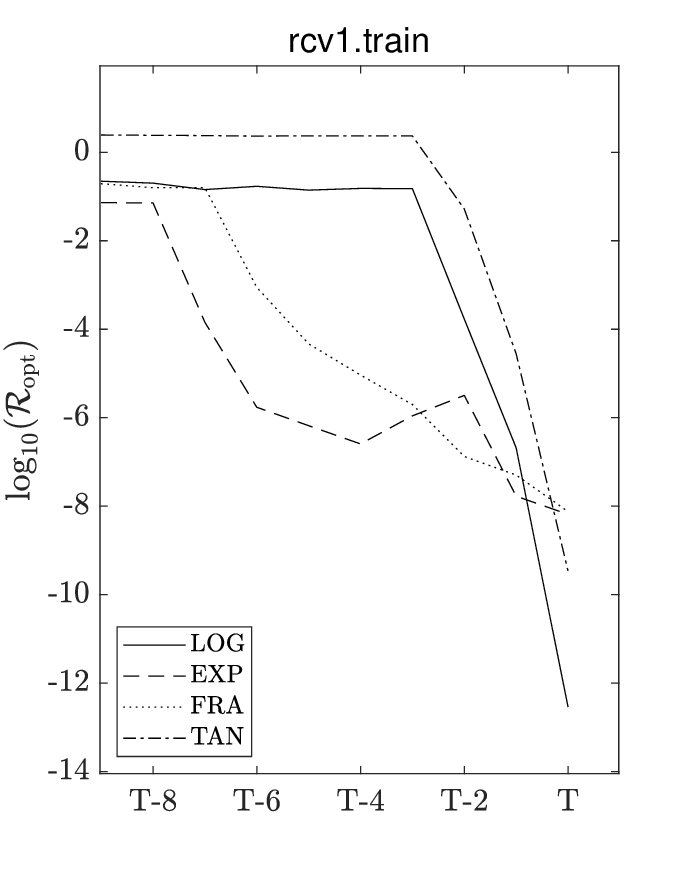}
\end{subfigure}
\caption{Convergence behavior for other nonconvex regularizers on real-world datasets.}
\label{fig.extended}
\end{figure}

\section{Conclusion} 
In this paper, we presented IReNA, a hybrid algorithm designed for a class of nonconvex and nonsmooth sparsity-promoting regularization problems. IReNA adaptively switches between subspace iteratively reweighted $\ell_1$ and subspace regularized Newton iterations based on the sign changes of consecutive iterates. We established the global convergence of the entire sequence of iterates and demonstrated local linear convergence under the KL property. Furthermore, when the subspace quadratic subproblem is solved exactly, the method exhibits local quadratic convergence. Numerical experiments across various  model prediction tasks validated the effectiveness of IReNA, demonstrating its potential for addressing large-scale nonconvex sparse optimization problems.

\section*{Acknowledgments}
The research of Xiangyu Yang was partially supported by National Natural Science Foundation of China No.~12301398 and the China Scholarship Council under Grant No.~202308410343.

\section*{Appendix} 
 \renewcommand{\thelemma}{A}
    \setcounter{equation}{0}

\begin{lemma}\label{lem.ist.ist}
    Let $d(\beta) = \Scal_{ \beta \omega  }(x  -  \beta g ) - x $  for $\beta > 0$ where $x, g\in\mathbb{R}^n$ and $\omega\in\mathbb{R}^n_{++}$. 
    It holds that 
    \begin{subequations}
            \begin{align}
                d_i(\beta)   &=   \beta  d_i(1)  && \textrm{     if  }\  x_i = 0,  \label{le.dk1.ist} \\
                |d_i(\beta)|   & \ge  \min\{\beta ,1\} | d_i(1) | && \textrm{     if  }\ x_i \ne 0. \label{le.dk2.ist}
        \end{align}
    \end{subequations}
              
      Moreover, for $\omega_i > |g_i- x_i/\beta|$, $d_i(\beta) = - x_i$. 
     \end{lemma}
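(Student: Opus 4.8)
The plan is to treat the three assertions separately, exploiting that the map $\mu \mapsto d(\mu)$ decouples across coordinates (the quadratic model is separable and $g$ is a fixed vector), so every statement reduces to a one-dimensional computation in the fixed index $i$. Writing $a = x_i$, $b = g_i$, $w = \omega_i > 0$, the weighted soft-thresholding definition gives
\[ d_i(\mu) = \sign(a - \mu b)\,\max(|a - \mu b| - \mu w,\, 0) - a. \]
All three claims are statements about this scalar function of $\mu > 0$.

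First I would dispatch the two easy claims. For \eqref{le.dk1.ist}, set $a = 0$; then $\sign(-\mu b) = -\sign(b)$ for $\mu > 0$ and $\max(\mu|b| - \mu w, 0) = \mu\max(|b| - w, 0)$, so $d_i(\mu) = -\sign(b)\,\mu\max(|b|-w,0) = \mu\,d_i(1)$ by positive homogeneity of $\max(\cdot,0)$ and invariance of $\sign$ under positive scaling. For the ``moreover'' clause, observe that whenever the threshold dominates, $\mu w \ge |a - \mu b|$, the inner $\max$ vanishes, hence $[\mathbb{S}_{\mu\bm{\omega}}(\bm{x}-\mu g)]_i = 0$ and $d_i(\mu) = -a = -x_i$; specializing $\mu = 1$ recovers the stated condition $\omega_i > |g_i - x_i|$. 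Both are one-line computations.

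The substance is \eqref{le.dk2.ist}, for which I would prove the two monotonicity facts that $\mu \mapsto |d_i(\mu)|$ is non-decreasing and $\mu \mapsto |d_i(\mu)|/\mu$ is non-increasing. From these \eqref{le.dk2.ist} follows instantly: for $\mu \ge 1$ the first gives $|d_i(\mu)| \ge |d_i(1)| = \min\{\mu,1\}|d_i(1)|$, while for $\mu \le 1$ the second gives $|d_i(\mu)|/\mu \ge |d_i(1)|$, i.e.\ $|d_i(\mu)| \ge \mu|d_i(1)| = \min\{\mu,1\}|d_i(1)|$. To establish both monotonicities I would use the explicit piecewise description of $d_i$: on the two \emph{active} regimes ($|a-\mu b| > \mu w$, with $a-\mu b$ of either sign) one has $d_i(\mu) = -\mu(b + w)$ or $d_i(\mu) = -\mu(b - w)$, so $|d_i(\mu)|$ is linear through the origin and $|d_i(\mu)|/\mu$ is constant; on the \emph{inactive} regime ($|a-\mu b| \le \mu w$) one has $d_i(\mu) = -a$, so $|d_i(\mu)| = |a|$ is constant and $|d_i(\mu)|/\mu = |a|/\mu$ is decreasing. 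Within each piece both monotonicities hold, and $d_i$ is continuous in $\mu$ because the soft-thresholding operator is continuous.

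The main obstacle will be gluing the pieces, i.e.\ showing that the global monotonicity survives the transitions as $\mu$ traverses $(0,\infty)$. Since $a = x_i \ne 0$, for small $\mu$ the coordinate starts in an active regime (the vanishing threshold is dominated by $|a|$), and the breakpoints are the finitely many $\mu$ where $a - \mu b$ changes sign or where $|a - \mu b| = \mu w$ (a quadratic in $\mu$, hence at most two roots). The key algebraic point is that continuity forces each active branch entering the inactive regime to have reached magnitude exactly $|a|$, so $|d_i|$ rises to $|a|$ and then stays flat, with no downward jump, while $|d_i|/\mu$ passes continuously from a constant to the decaying value $|a|/\mu$, with no upward jump; a symmetric check handles any re-entry into an active regime. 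Verifying this compatibility at each breakpoint is the delicate bookkeeping. Alternatively, I could bypass the case analysis entirely by invoking the standard monotonicity of the proximal-gradient residual for a convex regularizer read coordinatewise, since $\omega_i|\cdot|$ is convex and $d_i(\mu) = -\mu[G_\mu]_i$ with $G_\mu$ the associated prox-gradient map; this yields both monotonicities at once.
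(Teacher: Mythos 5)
Your proposal is correct, but it is organized around a different key fact than the paper's proof. The paper works directly from the same piecewise description of $d_i(\mu)$ that you derive (namely $d_i(\mu)=-\mu(g_i+\omega_i)$, $-\mu(g_i-\omega_i)$, or $-x_i$ according to the position of $x_i$ relative to $\mu(g_i\pm\omega_i)$), but it then performs a two-point comparison: it fixes the branch that $d_i(1)$ lies in (three cases) and, within each, enumerates the possible orderings of $x_i,\ \mu(g_i+\omega_i),\ \mu(g_i-\omega_i)$ to bound $|d_i(\mu)|$ against $\min\{\mu,1\}|d_i(1)|$ case by case. You instead prove the two monotonicity statements --- $\mu\mapsto|d_i(\mu)|$ non-decreasing and $\mu\mapsto|d_i(\mu)|/\mu$ non-increasing --- and read off \eqref{le.dk2.ist} as an immediate corollary; the gluing across breakpoints that you flag as the delicate step does go through (each active branch is linear through the origin, the inactive branch is constant at $|x_i|$, continuity matches the values at every transition, and the sign change of $x_i-\mu g_i$ can only occur inside the inactive regime), and your fallback of invoking the standard step-size monotonicity of the prox-gradient displacement for a convex separable regularizer is equally valid. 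Your route proves strictly more (a comparison between any two step sizes, not just $\mu$ versus $1$) and is arguably cleaner; the paper's route is a self-contained, if more laborious, direct verification. Two further points in your favor: you obtain the correct identity $d_i(\mu)=\mu d_i(1)$ for $x_i=0$ (the paper's proof contains a sign typo, writing $-\mu d_i(1)$), and your treatment of the ``moreover'' clause is the honest one --- the general statement requires the scaled condition $\mu\omega_i\ge|x_i-\mu g_i|$, which is what the paper actually uses later (in the support-stability theorem), while the condition $\omega_i>|g_i-x_i|$ as literally written only yields the $\mu=1$ case.
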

    \begin{proof}
       It follows from the soft-thresholding operator  that
     \begin{equation*}\label{eq.istd.ist}
           d_i(\beta) =
            \begin{cases}
                -\beta(g_i + \omega_i ), & \textrm{if   }\beta(g_i + \omega_i )<x_i,\\
                -\beta(g_i - \omega_i ), & \textrm{if   }\beta(g_i - \omega_i )>x_i,\\
                -x_i, & \textrm{otherwise.}
            \end{cases}
        \end{equation*}
    If $i \in \Ical_0(x )$,  $x_i  = 0$. It is obvious that \eqref{le.dk1.ist} holds. If $i \in \Ical(x )$, $x_i  \ne 0$.   We check the values of $d_i(1)$.  
    
If \( d_i(1) = - (g_i + \omega_i) \), then we have \( g_i + \omega_i < x_i \). Based on the expression of \( d_i(1) \), we analyze the relative order of \( x_i \), \( \beta(g_i + \omega_i) \), and \( \beta(g_i - \omega_i) \). Consider the following three cases:

\begin{itemize}
    \item[(i)] If \( \beta(g_i - \omega_i) < \beta(g_i + \omega_i) < x_i \),  
    this corresponds to the first case in the definition of \( d_i(\beta) \), meaning that  $d_i(\beta) = -\beta(g_i+\omega_i) = -\beta d_i(1)$.
    
    \item[(ii)] If \( \beta(g_i - \omega_i) \leq x_i \leq \beta(g_i + \omega_i) \),  
    this falls under the third case in \( d_i(\beta) \), leading to  $d_i(\beta) = -x_i$. Since \( g_i + \omega_i < x_i \leq \beta(g_i + \omega_i) \), we have two possible subcases:
    \begin{itemize}
        \item If \( 0 < g_i + \omega_i < x_i \), then \( \beta \geq \frac{x_i}{g_i + \omega_i} \geq 1 \).
        \item If \( g_i + \omega_i < x_i < 0 \), then \( \beta \leq \frac{x_i}{g_i + \omega_i} \leq 1 \).
    \end{itemize}
    In either case, it follows that  $|x_i| \geq \min(\beta,1) |g_i + \omega_i|$.
    Consequently, we obtain $|d_i(\beta)| = |x_i| \geq \min(\beta,1) |g_i + \omega_i| = \min(\beta,1) |d_i(1)|.$

    \item[(iii)] If \( x_i < \beta(g_i - \omega_i) < \beta(g_i + \omega_i) \),  
    this corresponds to the second case in \( d_i(\beta) \), giving $d_i(\beta) = -\beta(g_i - \omega_i).$ Since \( g_i + \omega_i < x_i < \beta(g_i - \omega_i) < \beta(g_i + \omega_i) \), we deduce that:
    \begin{itemize}
        \item If \( g_i + \omega_i > 0 \), then \( \beta > 1 \).
        \item If \( g_i + \omega_i < 0 \), then \( \beta < 1 \).
    \end{itemize}
    In either case, we conclude that  $| \beta(g_i - \omega_i) | > \min(\beta,1) | g_i + \omega_i |,$ which implies  $|d_i(\beta)| = | \beta(g_i - \omega_i) | > \min(\beta,1) | g_i + \omega_i |  = \min(\beta,1)  | d_i(1) |.$
\end{itemize}
    
Consider the remaining two cases: $d_i(1) = -( g_i - \omega_i)$ or $d_i(1) =  - x_i$. Using the same argument based on the order of $ x_i, \beta(g_i + \omega_i )$, and $ \beta(g_i - \omega_i )$ , it is straightforward to verify  that \eqref{le.dk2.ist} holds. Furthermore, the latter statement naturally follows for the soft-thresholding operator. This completes the proof.
\end{proof}

\printbibliography

@article{attouch2009convergence,
  title={On the convergence of the proximal algorithm for nonsmooth functions involving analytic features},
  author={Attouch, Hedy and Bolte, J{\'e}r{\^o}me},
  journal={Mathematical Programming},
  volume={116},
  pages={5--16},
  year={2009},
  publisher={Springer}
}

@article{wen2018proximal,
	title={A proximal difference-of-convex algorithm with extrapolation},
	author={Wen, Bo and Chen, Xiaojun and Pong, Ting Kei},
	journal={Computational {O}ptimization and {A}pplications},
	volume={69},
	number={2},
	pages={297--324},
	year={2018},
	publisher={Springer}
}

@article{zeng2016sparse,
	title={Sparse regularization: Convergence of iterative jumping thresholding algorithm},
	author={Zeng, Jinshan and Lin, Shaobo and Xu, Zongben},
	journal={IEEE Transactions on Signal Processing},
	volume={64},
	number={19},
	pages={5106--5118},
	year={2016},
	publisher={IEEE}
}

@book{luo1996mathematical,
  title={Mathematical programs with equilibrium constraints},
  author={Luo, Zhi-Quan and Pang, Jong-Shi and Ralph, Daniel},
  year={1996},
  address={Cambridge},
  publisher={Cambridge University Press}
}

@article{attouch2013convergence,
	title={Convergence of descent methods for semi-algebraic and tame problems: proximal algorithms, forward--backward splitting, and regularized {G}auss--{S}eidel methods},
	author={Attouch, Hedy and Bolte, J{\'e}r{\^o}me and Svaiter, Benar Fux},
	journal={Mathematical Programming},
	volume={137},
	number={1-2},
	pages={91--129},
	year={2013},
	publisher={Springer}
}

@article{chen2017reduced,
	title={A Reduced-Space Algorithm for Minimizing $\ell_1$-Regularized Convex Functions},
	author={Chen, Tianyi and Curtis, Frank E and Robinson, Daniel P},
	journal={SIAM Journal on Optimization},
	volume={27},
	number={3},
	pages={1583--1610},
	year={2017},
	publisher={SIAM}
}

@article{lu2014iterative,
	title={Iterative reweighted minimization methods for $\ell_p$ regularized unconstrained nonlinear programming},
	author={Lu, Zhaosong},
	journal={Mathematical Programming},
	volume={147},
	number={1-2},
	pages={277--307},
	year={2014},
	publisher={Springer}
}

@article{wu2023regularized,
  title={A Regularized {N}ewton Method for $\ell_q$-Norm Composite Optimization Problems},
  author={Wu, Yuqia and Pan, Shaohua and Yang, Xiaoqi},
  journal={SIAM Journal on Optimization},
  volume={33},
  number={3},
  pages={1676--1706},
  year={2023},
  publisher={SIAM}
}

@article{wang2022extrapolated,
  title={An extrapolated iteratively reweighted $\ell_1$ method with complexity analysis},
  author={Wang, Hao and Zeng, Hao and Wang, Jiashan},
  journal={Computational Optimization and Applications},
  volume={83},
  number={3},
  pages={967--997},
  year={2022},
  publisher={Springer}
}

@article{lai2013improved,
  title={Improved iteratively reweighted least squares for unconstrained smoothed $\ell_q$ minimization},
  author={Lai, Ming-Jun and Xu, Yangyang and Yin, Wotao},
  journal={SIAM Journal on Numerical Analysis},
  volume={51},
  number={2},
  pages={927--957},
  year={2013},
  publisher={SIAM}
}

@article{xu2012l,
  title={$\ell_{1/2}$ regularization: A thresholding representation theory and a fast solver},
  author={Xu, Zongben and Chang, Xiangyu and Xu, Fengmin and Zhang, Hai},
  journal={IEEE Transactions on {N}eural {N}etworks and {L}earning {S}ystems},
  volume={23},
  number={7},
  pages={1013--1027},
  year={2012},
  publisher={IEEE}
}

@article{wang2021relating,
  title={Relating $\ell_p$ regularization and reweighted $\ell_1$ regularization},
  author={Wang, Hao and Zeng, Hao and Wang, Jiashan and Wu, Qiong},
  journal={Optimization Letters},
  volume={15},
  number={8},
  pages={2639--2660},
  year={2021},
  publisher={Springer}
}

@article{wang2023convergence,
  title={Convergence rate analysis of proximal iteratively reweighted $\ell_1$ methods for $\ell_p$ regularization problems},
  author={Wang, Hao and Zeng, Hao and Wang, Jiashan},
  journal={Optimization Letters},
  volume={17},
  number={2},
  pages={413--435},
  year={2023},
  publisher={Springer}
}

@article{bradley1998feature,
  title={Feature selection via mathematical programming},
  author={Bradley, Paul S and Mangasarian, Olvi L and Street, W Nick},
  journal={INFORMS Journal on Computing},
  volume={10},
  number={2},
  pages={209--217},
  year={1998},
  publisher={INFORMS}
}

@article{lobo2007portfolio,
  title={Portfolio optimization with linear and fixed transaction costs},
  author={Lobo, Miguel Sousa and Fazel, Maryam and Boyd, Stephen},
  journal={Annals of Operations Research},
  volume={152},
  pages={341--365},
  year={2007},
  publisher={Springer}
}

@inproceedings{fazel2003log,
  title={Log-det heuristic for matrix rank minimization with applications to {H}ankel and {E}uclidean distance matrices},
  author={Fazel, Maryam and Hindi, Haitham and Boyd, Stephen P},
  booktitle={Proceedings of the 2003 American Control Conference},
  volume={3},
  pages={2156--2162},
  year={2003},
  organization={IEEE}
}

@article{candes2008enhancing,
  title={Enhancing sparsity by reweighted $\ell_1$ minimization},
  author={Candes, Emmanuel J and Wakin, Michael B and Boyd, Stephen P},
  journal={Journal of Fourier {A}nalysis and {A}pplications},
  volume={14},
  pages={877--905},
  year={2008},
  publisher={Springer}
}

@article{wang2021nonconvex,
  title={Nonconvex and nonsmooth sparse optimization via adaptively iterative reweighted methods},
  author={Wang, Hao and Zhang, Fan and Shi, Yuanming and Hu, Yaohua},
  journal={Journal of Global Optimization},
  volume={81},
  pages={717--748},
  year={2021},
  publisher={Springer}
}

@article{zhang2010analysis,
  title={Analysis of multi-stage convex relaxation for sparse regularization.},
  author={Zhang, Tong},
  journal={Journal of Machine Learning Research},
  volume={11},
  number={3},
  year={2010}
}

@article{fan2001variable,
  title={Variable selection via nonconcave penalized likelihood and its oracle properties},
  author={Fan, Jianqing and Li, Runze},
  journal={Journal of the American Statistical Association},
  volume={96},
  number={456},
  pages={1348--1360},
  year={2001},
  publisher={Taylor \& Francis}
}

@article{zhang2010nearly,
  title={Nearly unbiased variable selection under minimax concave penalty},
  author={Zhang, Cun-Hui},
  journal={The Annals of Statistics},
  volume={38},
  number={2},
  pages={894--942},
  year={2010}
}

@article{liu2007sparse,
  title={Sparse logistic regression with $\ell_p$ penalty for biomarker identification},
  author={Liu, Zhenqiu and Jiang, Feng and Tian, Guoliang and Wang, Suna and Sato, Fumiaki and Meltzer, Stephen J and Tan, Ming},
  journal={Statistical Applications in Genetics and Molecular Biology},
  volume={6},
  number={1},
  year={2007},
  publisher={De Gruyter}
}

@article{curtis2021trust,
  title={Trust-region {Newton-CG} with strong second-order complexity guarantees for nonconvex optimization},
  author={Curtis, Frank E and Robinson, Daniel P and Royer, Cl{\'e}ment W and Wright, Stephen J},
  journal={SIAM Journal on Optimization},
  volume={31},
  number={1},
  pages={518--544},
  year={2021},
  publisher={SIAM}
}

@article{chen2013optimality,
  title={Optimality conditions and a smoothing trust region newton method for non-{L}ipschitz optimization},
  author={Chen, Xiaojun and Niu, Lingfeng and Yuan, Yaxiang},
  journal={SIAM Journal on Optimization},
  volume={23},
  number={3},
  pages={1528--1552},
  year={2013},
  publisher={SIAM}
}

@article{bareilles2023newton,
  title={Newton acceleration on manifolds identified by proximal gradient methods},
  author={Bareilles, Gilles and Iutzeler, Franck and Malick, J{\'e}r{\^o}me},
  journal={Mathematical Programming},
  volume={200},
  number={1},
  pages={37--70},
  year={2023},
  publisher={Springer}
}

@article{keskar2016second,
  title={A second-order method for convex $\ell_1$-regularized optimization with active-set prediction},
  author={Keskar, N and Nocedal, Jorge and {\"O}ztoprak, Figen and Waechter, Andreas},
  journal={Optimization Methods and Software},
  volume={31},
  number={3},
  pages={605--621},
  year={2016},
  publisher={Taylor \& Francis}
}

@article{barzilai1988two,
  title={Two-point step size gradient methods},
  author={Barzilai, Jonathan and Borwein, Jonathan M},
  journal={IMA {J}ournal of {N}umerical {A}nalysis},
  volume={8},
  number={1},
  pages={141--148},
  year={1988},
  publisher={Oxford University Press}
}

@article{liu2024bisection,
  title={A Bisection method for computing the proximal operator of the $\ell_p$-norm for any $0< p< 1$ with application to {S}chatten p-norms},
  author={Liu, Yulan and Lin, Rongrong},
  journal={Journal of Computational and Applied Mathematics},
  volume={447},
  pages={115897},
  year={2024},
  publisher={Elsevier}
}

@article{lee2014proximal,
  title={Proximal Newton-type methods for minimizing composite functions},
  author={Lee, Jason D and Sun, Yuekai and Saunders, Michael A},
  journal={SIAM Journal on Optimization},
  volume={24},
  number={3},
  pages={1420--1443},
  year={2014},
  publisher={SIAM}
}

@article{yue2019family,
  title={A family of inexact SQA methods for non-smooth convex minimization with provable convergence guarantees based on the {Luo--Tseng} error bound property},
  author={Yue, Man-Chung and Zhou, Zirui and So, Anthony Man-Cho},
  journal={Mathematical Programming},
  volume={174},
  number={1},
  pages={327--358},
  year={2019},
  publisher={Springer}
}

@article{mordukhovich2023globally,
  title={A globally convergent proximal Newton-type method in nonsmooth convex optimization},
  author={Mordukhovich, Boris S and Yuan, Xiaoming and Zeng, Shangzhi and Zhang, Jin},
  journal={Mathematical Programming},
  volume={198},
  number={1},
  pages={899--936},
  year={2023},
  publisher={Springer}
}

@article{li2018calculus,
  title={Calculus of the exponent of {K}urdyka--{{\L}}ojasiewicz inequality and its applications to linear convergence of first-order methods},
  author={Li, Guoyin and Pong, Ting Kei},
  journal={Foundations of {C}omputational {M}athematics},
  volume={18},
  number={5},
  pages={1199--1232},
  year={2018},
  publisher={Springer}
}

@article{liang2017activity,
  title={Activity identification and local linear convergence of forward--backward-type methods},
  author={Liang, Jingwei and Fadili, Jalal and Peyr{\'e}, Gabriel},
  journal={SIAM Journal on Optimization},
  volume={27},
  number={1},
  pages={408--437},
  year={2017},
  publisher={SIAM}
}

@book{rockafellar2009variational,
	title={Variational Analysis},
	author={Rockafellar, R Tyrrell and Wets, Roger J-B},
	address={Heidelberg, Germany},
	volume={317},
	year={2009},
	publisher={Springer Berlin}
}

@article{zhou2023revisiting,
  title={Revisiting $\ell_q (0\leq q<1)$ Norm Regularized Optimization},
  author={Zhou, Shenglong and Xiu, Xianchao and Wang, Yingnan and Peng, Dingtao},
  journal={arXiv preprint arXiv:2306.14394},
  year={2023}
}

@article{lewis2002active,
author = {Lewis, A. S.},
title = {Active Sets, Nonsmoothness, and Sensitivity},
journal = {SIAM Journal on Optimization},
volume = {13},
number = {3},
pages = {702-725},
year = {2002}
}

@article{burke2014sequential,
  title={A sequential quadratic optimization algorithm with rapid infeasibility detection},
  author={Burke, James V and Curtis, Frank E and Wang, Hao},
  journal={SIAM Journal on Optimization},
  volume={24},
  number={2},
  pages={839--872},
  year={2014},
  publisher={SIAM}
}

@article{chen2010smoothing,
  title={Smoothing nonlinear conjugate gradient method for image restoration using nonsmooth nonconvex minimization},
  author={Chen, Xiaojun and Zhou, Weijun},
  journal={SIAM Journal on Imaging Sciences},
  volume={3},
  number={4},
  pages={765--790},
  year={2010},
  publisher={SIAM}
}

@article{chartrand2008restricted,
  title={Restricted isometry properties and nonconvex compressive sensing},
  author={Chartrand, Rick and Staneva, Valentina},
  journal={Inverse Problems},
  volume={24},
  number={3},
  pages={035020},
  year={2008},
  publisher={IOP Publishing}
}

@article{won2023unified,
  title={A unified analysis of convex and non-convex $\ell_p$-ball projection problems},
  author={Won, Joong-Ho and Lange, Kenneth and Xu, Jason},
  journal={Optimization {L}etters},
  volume={17},
  number={5},
  pages={1133--1159},
  year={2023},
  publisher={Springer}
}

@article{hare2009proximal,
  title={A proximal method for identifying active manifolds},
  author={Hare, Warren L},
  journal={Computational {O}ptimization and Applications},
  volume={43},
  pages={295--306},
  year={2009},
  publisher={Springer}
}
\end{document}